\numberwithin{equation}{section}
\DeclareMathAlphabet{\mathbbo}{U}{bbold}{m}{n}
\newtheorem{theorem}{Theorem}[section]
\newtheorem*{theoSansNum}{Theorem}
\newtheorem{lem}[theorem]{Lemma}
\newtheorem{defi}[theorem]{Definition}
\newtheorem{cor}[theorem]{Corollary}
\newenvironment{rmq}{\stepcounter{theorem}\noindent\textbf{Remark \thetheorem}.}{}
\newenvironment{ex}{\stepcounter{theorem}\noindent\textbf{Example \thetheorem.}}{}
\newcommand{\N}{\mathbb N}
\newcommand{\Z}{\mathbb Z}
\newcommand{\R}{\mathbb R}
\newcommand{\Q}{\mathbb Q}
\renewcommand{\mod}{\,\mathrm{mod}\,}
\begin{document}

\title{Large subsets avoiding algebraic patterns}

\author{A. Bailleul}
\address{ENS Paris-Saclay, Centre Borelli, UMR 9010, 91190 Gif-sur-Yvette, France}
\email{alexandre.bailleul@ens-paris-saclay.fr}

\author{R. Riblet}
\address{ENS Paris-Saclay, Centre Borelli, UMR 9010, 91190 Gif-sur-Yvette, France}
\email{robin.riblet@ens-paris-saclay.fr}

\date{}

  \begin{abstract}
We prove the existence of a subset of the torus with large sumsets and avoiding all linear patterns. This extends a result of Körner, who had shown that for any integer $q \geq 1$, there exists a subset $K$ of $\R/\Z$ satisfying no non-trivial linear relations of order $2q-1$ and such that $q.K$ has positive Lebesgue measure. Our method is based on transfinite induction, which also allows us to produce large sets in different senses (cardinality, outer Lebesgue measure or Hausdorff dimension) avoiding families of algebraic patterns, for example Sidon sets in infinite abelian groups with small $2$ and $3$-torsion or sets with no repeated distances in $\R^n$. We also discuss questions of measurability of such sets and the role of the axiom of choice in our constructions.
\end{abstract}

\maketitle

\section*{Introduction and main results}

Large subsets avoiding particular patterns constitute an important topic of interest in modern analysis, geometric measure theory and additive combinatorics. When a structural result exists, it is always interesting to see how far one can go to push two different constraints working against each other. For example, Roth's celebrated theorem \cite{Roth} states that every subset of the first $N$ integers devoid of arithmetic progressions of length 3 has cardinality $o(N)$ so such a set must be sparse. On the other hand it is natural to wonder if it is still possible to construct such a (sequence of) set(s) which is still somewhat dense, for example with a fast growing counting function. This was done by Salem-Spencer \cite{SS} (and later improved by Behrend \cite{Behr}), who showed one can construct such a set with an almost-linear growth. For another example, we can mention Ruzsa's construction \cite{Ruzsa}, which establishes the existence of an infinite Sidon set (see the discussion about Section \ref{PartieSidon} below) with growth $N^{\sqrt{2}-1+o(1)}$ in the natural numbers, while it is elementary that such a set cannot have growth larger than $N^{1/2}$ (in fact it cannot grow faster than $\frac{N^{1/2}}{\sqrt{\log N}}$ \cite{Stohr}). This is the best known result in this direction and it would certainly be very interesting to determine if this is optimal or not. In the continuous case, many papers are also dedicated to proving the existence of large sets avoiding certain patterns (see for example \cite{Korner, Mathe, CillerueloRuzsa, Keleti, Mattila, ErdosKakutani, Davies, Kunen}). The Kakeya problem, recently announced to be resolved in dimension $3$ \cite{WZ} can also be classified in this circle of ideas. In this context, the largeness conditions are often stated in terms of Lebesgue
measure, Hausdorff dimension, Fourier dimension, etc. In this paper, we develop a method based on transfinite induction (assuming a form of the axiom of choice when necessary) to extend several results in this direction.

In Section \ref{sec:lineq} we provide general terminology and results on linear equations in abelian groups that are used later in the paper.

In Section \ref{sec:indep}, we tackle our first construction of a large subset avoiding linear patterns. Recall the following celebrated result of Körner.

\begin{theoSansNum}[Körner, \cite{Korner}]
    If $q \geq 1$ is an integer, then we can find a closed subset $E$ of the torus $\R/\Z$ with the following properties.
    \begin{itemize}
        \item[(i)] $q.E$ has strictly positive Lebesgue measure.
        \item[(ii)] The equation $$\sum_{j=1}^{2q-1} m_j x_j = 0$$ has no non-trivial solution with $m_j \in \Z$ and the $x_j$ distinct points of $E$. 
    \end{itemize}
\end{theoSansNum}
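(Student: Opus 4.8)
The plan is to obtain $E$ as the support of a probability measure built by a randomised Cantor construction; since $E$ must be closed, an explicit construction of this kind seems unavoidable. \textbf{Reduction.} It suffices to produce a Borel probability measure $\mu$ on $\R/\Z$ whose support $E$ (automatically closed) satisfies:
\begin{itemize}
\item[(a)] the $q$-fold convolution $\mu^{*q}$ is absolutely continuous with respect to $\lambda$;
\item[(b)] $E$ contains no solution of any of the countably many relations $\sum_{j=1}^{2q-1} m_j x_j \equiv 0 \pmod 1$, with $(m_1,\dots,m_{2q-1}) \in \Z^{2q-1}\setminus\{0\}$ and $x_1,\dots,x_{2q-1}$ pairwise distinct.
\end{itemize}
Condition (b) is exactly (ii). For (i), recall that on the compact group $\R/\Z$ one has $\operatorname{supp}(\mu^{*q}) = \overline{\operatorname{supp}\mu + \dots + \operatorname{supp}\mu} = q.E$; since a non-zero absolutely continuous measure cannot be carried by a $\lambda$-null set, $\lambda(q.E) > 0$.

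\textbf{The construction.} Write $E = \bigcap_{k \ge 0} E_k$, where $E_0 = \R/\Z$ and $E_{k+1}$ is obtained from $E_k$ by replacing each of its arcs (of common length $\eta_k$) by $N_{k+1}$ closed sub-arcs of length $\eta_{k+1}$, and let $\mu = \lim_k \mu_k$ weak-$*$, where $\mu_k$ is normalised Lebesgue measure on $E_k$. The placement of the sub-arcs is subject to two competing demands. To force (a) one imposes a suitable scaling relation linking $N_k$ and $\eta_k$ and picks the positions so that the Fourier coefficients satisfy a Salem-type bound $|\widehat{\mu}(n)| \lesssim |n|^{-\beta}$ with $2q\beta > 1$; then $\sum_n |\widehat{\mu}(n)|^{2q} < \infty$, so $\mu^{*q}$ lies in $L^2 \subset L^1$ and (a) holds. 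Such a bound is only available when the sub-arcs are chosen ``as randomly as possible'', so at each level one selects their positions at random — but within the slack permitted by the Salem estimate one still has to engineer (b).

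\textbf{Killing the relations.} Fix a non-trivial relation $R(x) = \sum_{j=1}^{2q-1} m_j x_j$. Pairwise distinct points $x_1, \dots, x_{2q-1} \in E$ lie in pairwise distinct arcs from some level $k_0$ onward, so, conditionally on the configuration up to level $k_0$, the contributions to $x_1, \dots, x_{2q-1}$ coming from the random perturbations at levels $> k_0$ are mutually independent. Since $R(x) \in \sum_j m_j I_j$ whenever $I_j$ is the level-$k$ arc of $x_j$ (an arc of length $O(\max_j |m_j|\,\eta_k)$), one bounds the probability that $0 \in \sum_j m_j I_j$ for some choice of distinct level-$k$ arcs lying below the fixed level-$k_0$ configuration; if the parameters are chosen well this is summable in $k$, and Borel--Cantelli then forbids any approximate solution at all fine enough scales, hence any exact solution of $R$. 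Intersecting the resulting full-probability events over the countably many relations $R$ leaves a full-probability event on which both (a) and (b) hold, and any realisation of it gives the desired $E$.

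\textbf{The main obstacle.} Everything hinges on the tension between (a) and (b): the Salem estimate underlying the thickness of $q.E$ wants the sub-arcs placed as randomly as possible, whereas avoiding the relations wants a more disciplined, genuinely spread-out placement, and the crux of the argument is to tune the multiplicities $N_k$, the ratios $\eta_{k+1}/\eta_k$, and the amplitude of the random perturbations so that the uniform $L^2$-control of $\mu_k^{*q}$ and the summable Borel--Cantelli estimates — over all scales $k$ and all integer vectors $(m_j)$ at once — can coexist.
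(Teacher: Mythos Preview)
The paper does not prove this statement: it is K\"orner's theorem, quoted from the cited reference both in the introduction and again as Theorem~2.1, and no proof is supplied. The only comment the paper makes on K\"orner's method is in the first item of Remark~2.3, where it notes in passing that the result ``is obtained using the Baire Category Theorem''. There is therefore no proof in the paper to compare your proposal against.

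For what it is worth, your outline takes a different route from the one the paper attributes to K\"orner: you propose a randomised Cantor construction, a Salem-type Fourier decay estimate to force $\mu^{*q}\in L^2$, and Borel--Cantelli to rule out the linear relations almost surely, whereas K\"orner's argument is a Baire category argument in a suitable complete metric space of measures (or compact sets), showing that the desired properties hold generically. The two approaches are close cousins --- ``generic'' and ``almost sure'' constructions often shadow one another --- and your sketch is a plausible alternative strategy. It is, however, only a sketch: the quantitative balancing of $N_k$, $\eta_k$ and the perturbation amplitudes that you yourself flag as ``the main obstacle'' is precisely where all the work lies, and you have asserted that it can be done rather than done it.
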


In our construction, we generalize constraint (ii) above to having no non-trivial solution to any linear equation. We call such a set an \textit{independent set}.

\setcounter{section}{2}
\setcounter{theorem}{1}

\begin{theorem}\label{mainResult}
    There exists an independent subset $K$ of $\R/\Z$ such that $\lambda^*(K) =1$ (outer Lebesgue measure). In particular, $q.K$ has positive outer Lebesgue measure for any $q \geq 1$.
\end{theorem}

Section \ref{PartieSidon} deals with large Sidon subsets of abelian groups. In general, a Sidon set is a subset of an abelian semi-group such that its two-elements sums are essentially unique, \textit{i.e.} $a+b=c+d \Rightarrow \{a,b\} = \{c, d\}$. Sidon sets are extensively studied in the literature, and the main question regarding them is the 'maximum size' they can attain. The meaning of 'size' depends on the context. In the finite case, we are naturally concerned with the cardinality, and it is well known, following the works of Singer, Erd\H{o}s, Bose, and Chowla (see \cite{OBryant}), that the largest Sidon set $S$ contained within the first $n$ integers satisfies $|S| \sim \sqrt{n}$. In the positive integers, Ruzsa \cite{Ruzsa} proved the existence of an infinite Sidon set with growth $N^{\sqrt{2}-1+o(1)}$. It is also natural to ask how big the sumset of a Sidon set can be. Erd\H{o}s-Sarközi-S\'{o}s \cite{SidonPasBase2} showed that it is impossible for a Sidon set in $\N$ to be an asymptotic basis of order $2$, that is, such that $S+S$ contains all large enough integers. However, Pilatte \cite{Pilatte} proved the existence of a Sidon sequence which is also an asymptotic basis of order 3. In the non-discrete infinite case, the notion of 'size' can still have several different interpretations (see \cite{CillerueloRuzsa}, for example). It is easy to see that a measurable Sidon set in $\R$ has Lebesgue measure zero, but Körner's Theorem \ref{Kor} implies the existence of a Sidon subset $S$ of $\R/\Z$ such that $S + S + S$ has positive Lebesgue measure. Theorem \ref{Sidon} below vastly generalizes this, and in particular shows the existence of a Sidon subset $S$ of $\R/\Z$ such that $S + S = \mathbb{R}/\mathbb Z$. 

\setcounter{section}{3}
\setcounter{theorem}{0}

\begin{theorem} \label{Sidon}
Let $G$ be an infinite abelian group of cardinality $\kappa$ such that $|\ker(x \mapsto 2x)| < \kappa$ and $|\ker(x \mapsto 3x)| < \kappa$. Assume $E$ a subset of $G$ such that for every $x \in G$, $|\{(a, b) \in E^2 \mid a+b=x\}| = \kappa$. Then there exists a Sidon subset $S$ contained in $E$ such that $S+S=G$.
\end{theorem}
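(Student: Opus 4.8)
The plan is to build $S$ by transfinite recursion, treating one element of $G$ at each stage so as to force $S+S=G$ while never destroying the Sidon property. Fix a well-ordering of $G$ of order type $\kappa$ and write $G=\{g_\alpha : \alpha<\kappa\}$. I will construct an increasing chain $(S_\alpha)_{\alpha<\kappa}$ of Sidon subsets of $E$ with $g_\alpha\in S_\alpha+S_\alpha$ and $|S_\alpha|<\kappa$; then $S:=\bigcup_{\alpha<\kappa}S_\alpha$ is Sidon (being Sidon is a property of $4$-element subsets, hence is preserved under increasing unions), is contained in $E$, and satisfies $S+S=G$ since $g_\alpha\in S_\alpha+S_\alpha\subseteq S+S$ for every $\alpha$, while trivially $S+S\subseteq G$.

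At stage $\alpha$, assume $(S_\beta)_{\beta<\alpha}$ has been built and put $T:=\bigcup_{\beta<\alpha}S_\beta$, a Sidon subset of $E$; since each stage adds at most two elements, a routine transfinite induction gives $|T|<\kappa$. If $g_\alpha\in T+T$, set $S_\alpha:=T$: this case must be isolated, because if $g_\alpha=t_1+t_2$ with $t_1,t_2\in T$, then adjoining any pair of genuinely new elements summing to $g_\alpha$ would immediately break the Sidon property. Otherwise I look for $a,b\in E$ with $a+b=g_\alpha$, $a\ne b$, and $T\cup\{a,b\}$ still Sidon, and set $S_\alpha:=T\cup\{a,b\}$.

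The existence of such a pair is a counting argument. By hypothesis $E$ has cardinality $\kappa$, and applying the hypothesis to $x=g_\alpha$ shows that $A:=\{a\in E : g_\alpha-a\in E\}$ has cardinality $\kappa$, since $a\mapsto(a,g_\alpha-a)$ is a bijection from $A$ onto the $\kappa$ pairs of $E^2$ summing to $g_\alpha$. I then claim that the set $B$ of ``bad'' choices of $a$ — those with $a=g_\alpha-a$, or with $a$ or $g_\alpha-a$ already in $T$, or such that $T\cup\{a,\,g_\alpha-a\}$ fails to be Sidon — has cardinality $<\kappa$; granting this, any $a\in A\setminus B$ works, with $b:=g_\alpha-a$. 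To bound $|B|$, one runs through the ways a collision $p+q=r+s$ can occur in $T\cup\{a,b\}$, organised by how many of the four slots equal $a$ or $b$. Using that $T$ is already Sidon and that $g_\alpha\notin T+T$, one checks that each such possibility confines $a$ to one of finitely many sets of the following shapes: a singleton $\{t_i+t_j-t_k\}$ or $\{g_\alpha-(t_i+t_j-t_k)\}$ with $t_i,t_j,t_k\in T$; the solution set of an equation $2a=c$, where $c$ ranges over a set of size at most $|T|^2$ built from $g_\alpha$ and elements of $T$, or where $2a$ ranges over $g_\alpha+\ker(x\mapsto 2x)$; or the solution set of an equation $3a=c$, with $c$ in a set of size at most $|T|$ built from $g_\alpha$ and $T$. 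Each such set has size at most a finite product of the cardinals $|T|$, $|\ker(x\mapsto 2x)|$ and $|\ker(x\mapsto 3x)|$ (the solution set of $na=c$ is empty or a coset of $\ker(x\mapsto nx)$), all of which are $<\kappa$; since a product of finitely many cardinals each $<\kappa$ is again $<\kappa$, and there are finitely many of these sets, $|B|<\kappa$.

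Choosing $a\in A\setminus B$ and $b=g_\alpha-a$ completes stage $\alpha$: $S_\alpha=T\cup\{a,b\}\subseteq E$ is Sidon, $g_\alpha\in S_\alpha+S_\alpha$, and $|S_\alpha|\le|T|+2<\kappa$, so the recursion goes through and $S=\bigcup_{\alpha<\kappa}S_\alpha$ has all the required properties. The only genuine work is the case analysis bounding $B$; the delicate cases are the ``diagonal'' collisions $a+a=b+t$, $b+b=a+t$ and $a+a=b+b$, which translate (via $b=g_\alpha-a$) into $3a$, $3a$ and $2a$ respectively lying in a small set — this is exactly where the hypotheses on the $2$- and $3$-torsion of $G$ enter, and why one needs both kernels to be small.
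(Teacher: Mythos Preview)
Your argument is correct and is essentially the same as the paper's: well-order $G$, build an increasing chain of Sidon subsets of $E$ by transfinite recursion, and at each stage either $g_\alpha$ already lies in $T+T$ or one adjoins a pair $a,b\in E$ with $a+b=g_\alpha$ chosen outside an obstruction set of cardinality $<\kappa$. The paper makes the obstruction set fully explicit (listing eight cases in a table), whereas you describe the case analysis more informally, but the content and the role of the $2$- and $3$-torsion hypotheses are identical.
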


We also remark that Theorem \ref{inde} proves the existence of a Sidon set with Hausdorff dimension of 1. The existence of a Sidon set of Hausdorff dimension 1 was already known (see \cite{Keleti}) but we also discuss in the Section \ref{sec:compare} the lack of direct implication between full Hausdorff dimension and the measure of $S+S$. We prove the following two results.
\setcounter{section}{4}
\setcounter{theorem}{0}
\begin{theorem}
   There exists a set $E\subset[0,1]$ of Hausdorff dimension 1, such that $E+E$ have Lebesgue measure zero.
\end{theorem}

\begin{theorem}
    Let $h\in[1/2,1]$, then there exists a set $E$ of $[0,1]$ of Hausdorff dimension $h$ and such that $E+E$ has non-zero Lebesgue measure.
\end{theorem}

In Section \ref{ANLP}, we focus on the existence of sets that avoid certain geometric properties and have maximal Hausdorff dimension. First, we prove the following theorem. 

\setcounter{section}{5}
\setcounter{theorem}{1}
\begin{theorem}
    There exists a subset $A \subseteq \R$ which is algebraically independent and has full outer Lebesgue measure (and consequently Hausdorff dimension $1$).
\end{theorem}

\setcounter{theorem}{7}

Then, we focus on several geometric patterns that have been extensively studied in the literature (see \cite{Mathe}, and all of Section \ref{ANLP} for further discussion). In particular, our method is sufficiently flexible (thanks to Lemma \ref{brique}) to combine a lot of different constraints. As an example, we establish the following.

\begin{theorem}\label{thmIntro}
Let $n\in\N^*$. There exists a trapezoid-free subset $E\subset \R^n$ avoiding repeated distances that has total outer Lebesgue measure (and consequently Hausdorff dimension $n$) such that the points of $E$ do not form the same angle twice.
\end{theorem}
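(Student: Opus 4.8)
The plan is to construct $E$ by transfinite recursion of length $\mathfrak c=|\R^n|$, exactly in the spirit of the constructions of the previous sections (and of Theorem~6.2 in particular). Write $\mathfrak c$ also for the least ordinal of that cardinality and fix an enumeration $(F_\alpha)_{\alpha<\mathfrak c}$ of all closed subsets of $\R^n$ of positive Lebesgue measure; there are exactly $\mathfrak c$ of them. I will pick points $x_\alpha\in F_\alpha$, all distinct, so that at no stage is a forbidden configuration created, and set $E=\{x_\alpha:\alpha<\mathfrak c\}$. Then $E$ meets every closed set of positive measure, so its complement has inner measure zero, i.e. $E$ has full outer Lebesgue measure; and a subset of $\R^n$ of positive outer Lebesgue measure has Hausdorff dimension $n$ (otherwise $\mathcal H^n$ of it, hence $\lambda^*$ of it, would vanish). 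So everything reduces to running the recursion.

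Fix $\alpha<\mathfrak c$ and suppose $x_\beta$ chosen for $\beta<\alpha$; put $E_\alpha=\{x_\beta:\beta<\alpha\}$, so $|E_\alpha|<\mathfrak c$. Call $x$ \emph{admissible} if $x\notin E_\alpha$ and $E_\alpha\cup\{x\}$ contains no three collinear points, no four distinct points $a,b,c,d$ with the lines $(ab)$ and $(cd)$ parallel (in particular it is trapezoid-free), no two distinct unordered pairs realizing the same distance, and no two distinct triples $(q;\{p,r\})$, $(q';\{p',r'\})$ with $\angle pqr=\angle p'q'r'$. One checks that the set $B_\alpha$ of non-admissible points is a union of fewer than $\mathfrak c$ sets, each contained in a proper real-algebraic subvariety of $\R^n$ of degree bounded by an absolute constant, hence each Lebesgue-null: collinearity/parallelism confine $x$ to lines or hyperplanes; a relation $|x-a|=|c-d|$ confines $x$ to a sphere and $|x-a|=|x-b|$ to a bisecting hyperplane; and an angle equation, written via $\cos\angle=\langle\cdot,\cdot\rangle/(|\cdot|\,|\cdot|)$, cleared of denominators and squared, confines $x$ to a hypersurface of bounded degree (a cone when $x$ is a vertex of the angle, an inscribed-angle-type surface of revolution when $x$ is the apex opposite a fixed segment). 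The no-three-collinear clause is exactly what guarantees that these angle loci are proper subvarieties rather than accidentally all of $\R^n$. Since each such condition involves at most five points of $E_\alpha$, there are at most $|E_\alpha|^{5}+\aleph_0<\mathfrak c$ of these subvarieties.

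The one genuinely non-elementary point — and the step I expect to be the main obstacle — is that such a $B_\alpha$ cannot contain the positive-measure set $F_\alpha$. I will isolate the lemma: if $\kappa<\mathfrak c$ and $V_i$ $(i<\kappa)$ are proper real-algebraic subvarieties of $\R^n$ of degree $\le d$, then no set of positive Lebesgue measure lies in $\bigcup_i V_i$. Proof by induction on $n$: for $n=1$ each $V_i$ is finite, so $\bigcup_i V_i$ has cardinality $\le d\kappa<\mathfrak c$ while a positive-measure set has cardinality $\mathfrak c$. For $n>1$, apply the inductive hypothesis inside the space of directions to find a unit vector $u$ not parallel to any of the (at most $\kappa$) hyperplane-type $V_i$; by Fubini, $\mathfrak c$-many lines $\ell$ of direction $u$ meet the given positive-measure set in positive length; for each $i$ only finitely many lines of direction $u$ can lie inside $V_i$ (those through the vertex of a cone, a focus, etc.), so discarding them leaves $\mathfrak c$-many admissible $\ell$; and for such $\ell$, $\ell\cap\bigcup_iV_i$ has at most $d\kappa<\mathfrak c$ points whereas $\ell$ meets the set in $\mathfrak c$ points. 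Applying this to $F_\alpha$ (intersected with a bounded cube of positive measure) together with the $(<\mathfrak c)$-point set $E_\alpha$ yields $x_\alpha\in F_\alpha\setminus(B_\alpha\cup E_\alpha)$, which continues the recursion. Finally, if $E$ contained a forbidden configuration, taking the vertex of largest index $\alpha$ among its (at most six) points would contradict $x_\alpha\notin B_\alpha$, since every other vertex lies in $E_\alpha$. For $n=1$ the trapezoid and angle conditions are vacuous, and the scheme just produces a full-outer-measure subset of $\R$ with all distances distinct, which the same argument gives directly.
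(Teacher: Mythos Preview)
Your approach is the paper's: combine the polynomial obstructions from the three preceding theorems (trapezoids, repeated angles, repeated distances) into a single transfinite recursion over closed positive-measure sets, invoking at each step the lemma that a union of fewer than $\mathfrak c$ zero loci of nonzero polynomials in $\R^n$ has inner measure zero (the paper's Lemma~\ref{brique}).

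Your proof of that lemma has a small but genuine gap. After choosing a direction $u$ not normal to any hyperplane component, you assert that each remaining $V_i$ contains only \emph{finitely many} lines parallel to $u$. This fails for ruled varieties: the cylinder $\{x_1^2+x_2^2=1\}\subset\R^3$ with $u=e_3$ contains a one-parameter family of vertical lines, and no choice of $u$ avoiding only hyperplane normals will cure this. The paper's proof avoids the issue by slicing with hyperplanes $H_t=tu+u^\perp$ rather than with lines: for each irreducible $V_i\neq\R^n$, the slice $V_i\cap H_t$ is a proper subvariety of $H_t\cong\R^{n-1}$ except possibly for the single $t$ with $V_i=H_t$, and one chooses the direction $u$ (from among $\mathfrak c$ candidates) to avoid the $<\mathfrak c$ hyperplane normals arising this way. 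Fubini then produces some $t$ with $\lambda^{n-1}_*(F_\alpha\cap H_t)>0$, and the inductive hypothesis applied inside $H_t$ gives a point of $F_\alpha$ missing every $V_i$. Your bounded-degree hypothesis is unnecessary; the paper's lemma does without it.
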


We end the paper with a proof that, in some sense, we need to use some form of the axiom of choice to show the existence of sets such as $K$ in Theorem \ref{mainResult}. This may appear as a very strong assumption, but we show in Lemma \ref{nonMesur} that such a set (or its sumset) has to be non-measurable, and thus some form of choice stronger than $\mathsf{DC}$ is required (see the discussion in Section \ref{NonMesur}).

\subsection*{Acknowledgments} We thank Anne de Roton for pointing out to us a mistake in an earlier version of this paper.

\setcounter{section}{0}

\section{Linear equations and independent sets}
\label{sec:lineq}

Let us first make precise the notion of non-trivial solutions to a linear equation.

\begin{defi}
Let $G$ be an abelian group, $A$ a subset of $G$, $n \geq 1$ an integer and $m_1, \dots, m_n \in \Z$. Let $(E)$ denote the linear equation $\sum_{i=1}^n m_i x_i = 0$. Its order is the number of non-zero integers $m_i$, unless they are all zero in which case we say the order is $0$. We say a solution $(x_1, \dots, x_n) \in A^n$ of $(E)$ is trivial when there exists a partition $(P_i)_{1 \leq i \leq k}$ of $\{1, \dots, n\}$  such that for $1 \leq i \leq k$ we have $j \mapsto x_j$ is constant on $P_i$ and $\displaystyle \sum_{j \in P_i} m_j x_j = 0$. Other $n$-tuples of elements of $A$ satisfying this equality are called non-trivial solutions. We say $A$ is independent when it has no non-trivial solution to any linear equation.
\end{defi}

In other words, trivial solutions correspond to solutions that are always present in a non-empty subset of $G$. It is easy to see that this kind of solutions exists if and only if $\displaystyle \sum_{j=1}^n m_j = 0$, and that $A$ is independent when its elements are linearly independent when we see $G$ as a $\Z$-module. We remark that this definition generalizes the one given by Ruzsa in \cite{Ruzsa2}.\\

\begin{ex}
\begin{enumerate}
    \item Any equation of the form $x_i-x_j=0$ can only admit trivial solutions.
    \item More interestingly, the equation $2x_1-x_2-x_3=0$ also has trivial solutions in any non-empty subset $A$ of $G$, namely with $x_1=x_2=x_3=a$ for any $a \in A$. Then $A$ admits no non-trivial solutions to this equation if and only if $A$ contains no arithmetic progression of length $3$. If it also admits no non-trivial solutions to the equation $x_1+x_2-x_3-x_4=0$, then $A$ is a called a Sidon set, an important notion in additive combinatorics (see section \ref{PartieSidon}).
    \item This definition also deals with torsion elements. For example if $x$ is a torsion element of order $m$, then $x$ is a trivial solution to the equation $mx=0$.
\end{enumerate}
\end{ex}


Independent sets appeared in works on harmonic analysis. For example, the celebrated Kronecker theorem states that a finite independent set of real numbers generates a dense subset of the torus (\cite[Theorem 2.2.4]{Queff}). See \cite[Chapter 5]{Rudin} and \cite{KS} for generalizations and more details on the links between independent sets and harmonic analysis.\\

\begin{defi}
Let $G$ be an abelian group, $A$ a proper subset of $G$, $n \geq 1$ an integer and $m_1, \dots, m_n \in \Z$. Assume $A$ has no non-trivial solution to the equation $$(\mathcal E) : \sum_{i=1}^n m_i x_i = 0.$$ We say a set $B \subset G \setminus A$ is admissible for $(\mathcal E)$ if $A \cup B$ still has no non-trivial solution to the equation.
\end{defi}

\begin{lem}\label{admiss}
Let $G$ be an abelian group, $A$ a proper subset of $G$, $n \geq 1$ an integer and $m_1, \dots, m_n \in \Z$. For every $k \in \Z \setminus \{0\}$, let $f_k$ be the endomorphism $x \mapsto kx$ of $G$. Assume $A$ has no non-trivial solution to the equation $$(\mathcal E) : \sum_{i=1}^n m_i x_i = 0.$$ Then the set of non-admissible elements $a \in G \setminus A$ has cardinality at most $$(|A|^{n-1} + 1) . \max \{|\ker f_k| \mid k \in \Z \setminus \{0\}\}.$$ 
\end{lem}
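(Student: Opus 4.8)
The plan is to characterize when an element $a \in G \setminus A$ is non-admissible, and then count. By definition, $a$ is non-admissible precisely when $A \cup \{a\}$ has a non-trivial solution to $(\mathcal E)$. Since $A$ itself has no non-trivial solution, any such solution must actually use the point $a$: that is, there is a tuple $(x_1, \dots, x_n) \in (A \cup \{a\})^n$ with $\sum_i m_i x_i = 0$, this solution is non-trivial, and at least one coordinate equals $a$. Let $S = \{ i : x_i = a\}$, which is non-empty, and write $T = \{1, \dots, n\} \setminus S$; then $\left(\sum_{i \in S} m_i\right) a + \sum_{i \in T} m_i x_i = 0$, where the $x_i$ for $i \in T$ lie in $A$.

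The dichotomy I would set up is on the value of $k := \sum_{i \in S} m_i$. First case: $k \neq 0$. Then $a$ is a solution of $f_k(a) = -\sum_{i \in T} m_i x_i =: g$, where $g \in G$ is determined by the tuple $(x_i)_{i \in T}$, an element of $A^T$ with $|T| \leq n-1$ (since $S \neq \emptyset$). For fixed $g$ in the image of $f_k$, the solution set of $f_k(a) = g$ is a coset of $\ker f_k$, hence has at most $|\ker f_k|$ elements. The number of possible pairs $(T, (x_i)_{i \in T})$ is crudely bounded: summing over subsets $T$ of $\{1,\dots,n\}$ of size at most $n-1$ of the number of functions $T \to A$, one gets at most $|A|^{n-1}$ — here I should be a little careful about whether $A$ is finite and about the edge case $T = \emptyset$, which contributes the single constraint $f_k(a) = 0$, i.e. $a \in \ker f_k$. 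In any event this first case contributes at most $|A|^{n-1} \cdot \max_k |\ker f_k|$ non-admissible elements (absorbing the $T=\emptyset$ term into the count), and one can see that the stated bound with the extra $+1$ is designed to accommodate precisely this sort of bookkeeping.

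The second case is $k = \sum_{i \in S} m_i = 0$. Then the relation reads $\sum_{i \in T} m_i x_i = 0$ with all $x_i \in A$. I claim this forces the original solution to have been trivial, contradicting non-admissibility — or, if $T = \emptyset$, the relation is vacuous. The point is: a trivial solution of $(\mathcal E)$ in $A \cup \{a\}$ is one admitting a partition of $\{1,\dots,n\}$ into blocks on which $x$ is constant and the partial $m$-sums vanish; since $x$ is constant (equal to $a$) exactly on $S$ and $\sum_{i \in S} m_i = 0$, the block $S$ is self-contained, and on $T$ we have a solution of a linear equation lying entirely in $A$, which by hypothesis on $A$ decomposes trivially as well — splicing these partitions shows the whole solution was trivial. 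Hence the second case produces no non-admissible elements. Combining the two cases gives the bound.

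The main obstacle I anticipate is not the counting but the careful handling of the triviality condition in the second case: one must make sure that ``trivial in $A$ plus a self-balancing block at $a$'' really glues to ``trivial in $A \cup \{a\}$'', and in particular that repeated coordinates among the $x_i$ equal to $a$ versus repeated coordinates inside $A$ interact correctly with the partition. A secondary, purely cosmetic, nuisance is making the cardinality arithmetic clean when $A$ is infinite (so that $|A|^{n-1} = |A|$ and the $+1$ and the various lower-order terms are genuinely absorbed); this is where the slightly lossy constant $(|A|^{n-1}+1)$ in the statement comes from.
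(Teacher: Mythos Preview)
Your approach matches the paper's: split on whether $k = \sum_{i \in S} m_i$ vanishes, rule out $k = 0$ by arguing the original solution would then be trivial (equivalently, that replacing $a$ by an element of $A$ yields a non-trivial solution to $(\mathcal E)$ in $A$), and for $k \neq 0$ bound the number of $a$'s by counting preimages under $f_k$. You are in fact slightly more explicit than the paper about the $k=0$ step, which the paper dispatches in a single clause, and you correctly identify it as the one place requiring care.
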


\begin{proof}
Assume $a \in G \setminus A$ is non-admissible for $(\mathcal{E})$. That means there exists a non-trivial solution $x_1, \dots, x_n \in A \cup \{a\}$ such that $\sum_{i=1}^n m_i x_i = 0$. Moreover, since $A$ has no non-trivial solution to $(\mathcal{E})$, at least one of the $x_i$'s is $a$ and letting $I = \{i \in \{1, \dots, n\} \mid x_i = a\}$ we have $I \neq \emptyset$, but also $I \subsetneq \{1, \dots, n\}$ since this is a non-trivial solution. Moreover, we have $k = \sum_{i \in I} m_i \neq 0$, otherwise, we would have a non-trivial solution to $(\mathcal E)$ with elements in $A$. Thus, either $ka= 0$ and $a \in \ker f_k$, or $ka = - \displaystyle \sum_{\underset{i \not \in I}{1 \leq i \leq n}} m_i x_i$ can take at most $|A|^{n-1}$ values and $a$ can assume at most $|A|^{n-1}.|\ker f_k|$ values.
\end{proof}

\section{A large independent set}
\label{sec:indep}

In \cite{Korner}, Körner proves the following theorem.

\begin{theorem}[Körner, \cite{Korner}] \label{Kor}
    If $q \geq 1$ is an integer, then we can find a closed subset $E$ of the torus $\R/\Z$ with the following properties.
    \begin{itemize}
        \item[(i)] $q.E$ has strictly positive Lebesgue measure.
        \item[(ii)] The equation $$\sum_{j=1}^{2q-1} m_j x_j = 0$$ has no non-trivial solution with $m_j \in \Z$ and the $x_j$ distinct points of $E$. 
    \end{itemize}
\end{theorem}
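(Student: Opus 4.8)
\textbf{Proof proposal (for Körner's Theorem~\ref{Kor}).}

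The plan is to realize $E$ as the support of a probability measure $\mu$ on $\R/\Z$ produced by a random Cantor-type construction, tuned so that (i) follows from smoothness of the $q$-fold convolution $\mu*\dots*\mu$, written $\mu^{*q}$, and (ii) follows from ``arithmetic genericity'' of the construction. Note $q.E=\underbrace{E+\dots+E}_{q}$ is automatically closed, as is $\operatorname{supp}\mu$, so only (i)--(ii) need checking. The case $q=1$ is elementary: $1.E=E$, and any nowhere dense closed set of positive Lebesgue measure disjoint from $\Q/\Z$ works, since the only relations of order $1$ are $mx=0$, which force $x$ rational; so assume $q\ge 2$.

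\emph{Step 1 (the measure, and property (i)).} Fix $d$ with $1/q<d<1$, possible since $q\ge 2$. Build a nested sequence of closed sets $E_0\supseteq E_1\supseteq\cdots$ where $E_n$ is a disjoint union of $M_n$ closed intervals of common length $\ell_n$, with $\ell_n\to 0$, $\ell_{n+1}<\ell_n/2$, $M_n\ell_n^{\,d}\asymp 1$, and consecutive intervals well separated; to pass from $E_{n-1}$ to $E_n$, replace each interval of $E_{n-1}$ by $M_n/M_{n-1}$ subintervals whose endpoints are chosen independently and uniformly at random inside it. If $\mu_n$ is the normalised Lebesgue measure on $E_n$, then $\mu_n\to\mu$ weak-$*$ with $\operatorname{supp}\mu=E:=\bigcap_n E_n$, a Cantor set of Hausdorff dimension $d$. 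A standard second-moment (Salem) argument gives, almost surely, $|\widehat\mu(k)|=O_\varepsilon(|k|^{-d/2+\varepsilon})$ for every $\varepsilon>0$; taking $\varepsilon$ small yields $|\widehat\mu(k)|=O(|k|^{-\alpha})$ with $\alpha>1/(2q)$. Since $\widehat{\mu^{*q}}(k)=\widehat\mu(k)^q$, this gives $\sum_k|\widehat\mu(k)|^{2q}<\infty$, so $\mu^{*q}\in L^2(\R/\Z)$; being a nonzero $L^2$ function supported in $q.E$, it forces $\lambda(q.E)>0$, which is (i).

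\emph{Step 2 (property (ii)).} Enumerate the nonzero integer vectors $\mathbf m=(m_1,\dots,m_r)$ with $1\le r\le 2q-1$ as $\mathbf m^{(1)},\mathbf m^{(2)},\dots$, and for each write $J=\{j:m_j\neq0\}$. If $(x_j)$ is a nontrivial solution of $\sum_j m_j x_j=0$ in distinct points of $E$, then for $n$ large the coordinates $(x_j)_{j\in J}$ lie in distinct intervals of $E_n$ whose centres $c_j$ satisfy $\bigl\|\textstyle\sum_{j\in J}m_j c_j\bigr\|_{\R/\Z}\le\tfrac{\ell_n}{2}\sum_{j\in J}|m_j|$; conversely, if for some large $n$ this inequality \emph{fails} for every injective assignment of $J$ into the intervals of $E_n$, then (using $\ell_{n+1}<\ell_n/2$) the failure persists at all later stages and no such solution exists. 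It would therefore suffice to arrange, at stage $n$, for the finitely many relations $\mathbf m^{(1)},\dots,\mathbf m^{(n)}$, that every injective assignment of the relevant indices into the intervals of $E_n$ has $\bigl\|\sum m_j c_j\bigr\|>\ell_n\sum|m_j|$; one also arranges $E_n\cap\Q/\Z=\emptyset$ simultaneously, so that $m_j x_j=0$ forces $m_j=0$ for $x_j\in E$ and hence any solution in distinct points of $E$ with $\mathbf m\neq0$ is automatically non-trivial. Then (ii) follows by a diagonal argument over $n$. The adjustment at stage $n$ is made by perturbing the subinterval positions inside the already-fixed intervals of $E_{n-1}$ (each forbidden condition above carves out a nowhere dense, indeed small-measure, set of admissible position vectors) and, where a bad configuration cannot be removed this way, by discarding a controlled number of intervals.

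\emph{Main obstacle.} The crux is that Steps 1 and 2 pull against each other. Property (i) wants $E$ to be ``fat'' (dimension close to $1$) and its intervals laid out like a typical random sample, whereas a \emph{plain} random Cantor set of positive dimension acquires, with high probability, accidental linear relations: for a fixed $\mathbf m$ the number of potentially-bad interval assignments at stage $n$ grows like a power $M_n^{\,r}$ of the number of intervals, so a naive union bound over assignments never closes and one cannot simply ``choose the random set well.'' Instead one must, as in Körner's original argument, interleave the refinement of the interval structure with a scale-by-scale pruning/repositioning that kills exactly the relations visible at each scale; the delicate quantitative work—selecting $M_n$, $\ell_n$, the spacings and the sizes of the perturbations so that the cumulative pruning is small enough to preserve the $L^2$-bound on $\mu^{*q}$ while still being enough to keep all surviving configurations relation-free—is the heart of the proof.
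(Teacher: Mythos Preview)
The paper does not contain a proof of this theorem. Theorem~\ref{Kor} is quoted as a result of K\"orner \cite{Korner} and serves only as background and motivation for the paper's own construction (Theorem~\ref{inde}); the authors never attempt to prove it. There is therefore no ``paper's own proof'' against which to compare your attempt.

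On its own merits, your sketch points in a reasonable direction---build a Cantor-type measure whose $q$-fold self-convolution lies in $L^2$ (forcing (i)), while perturbing the interval positions at each scale to destroy the finitely many linear relations visible there (forcing (ii))---and you correctly identify that the tension between these two goals is where the real work lies. But as written this is a plan, not a proof: your ``Main obstacle'' paragraph explicitly concedes that the quantitative interleaving (the choice of $M_n$, $\ell_n$, the perturbation sizes, and the control of the cumulative effect of pruning on the Fourier decay of $\widehat\mu$) has not been carried out, and that is precisely the content of the theorem. Note also, for orientation, that the paper records (Remark~\ref{RmqInde}) that K\"orner's actual argument proceeds via the Baire Category Theorem rather than via a random construction with second-moment bounds, so even at the level of method your outline is a variant rather than a reconstruction of the original.
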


In other words, with Körner's construction, the higher the order of linear relations we want to avoid, the more summands we need \textit{a priori} to obtain positive measure. Our main result is the following.

\begin{theorem}\label{inde}
There exists an independent subset $K$ of $\R/\Z$ such that $\lambda^*(K)=1$. In particular $q.K$ has positive outer Lebesgue measure for any $q \geq 1$.
\end{theorem}

\begin{proof}
Let $(P_{\alpha})_{\alpha < \mathfrak c}$ be a well-ordering of the perfect closed subsets of $\R/\Z$, where $\mathfrak c$ denotes the cardinality of the continuum. There are clearly at least $\mathfrak c$ such sets because closed intervals are perfect, and there are at most $\mathfrak c$ closed subsets of $\R$. Let us construct by transfinite induction subsets $(K_{\alpha})_{\alpha < \mathfrak c}$ such that:
\begin{enumerate}
    \item $\forall \alpha, \beta < \mathfrak c$, $\alpha < \beta \Rightarrow K_{\alpha} \subset K_{\beta}$.
    \item $\forall \alpha < \mathfrak c$, $|K_{\alpha}| \leq |\alpha|+1 < \kappa$.
    \item $\forall \alpha < \mathfrak c$, $K_{\alpha} \cap P_{\alpha} \neq \emptyset$.
    \item $\forall \alpha < \mathfrak c$, $K_{\alpha}$ is independent.
\end{enumerate}
Take $x_0$ any element of $P_0$ and let $K_0 = \{x_0\}$. It obviously satisfies 2., 3. and 4. Now, let $\beta < \mathfrak c$ and assume $(K_{\alpha})_{\alpha < \beta}$ have been constructed satisfying the four properties above. Since $\bigcup_{\alpha < \beta} K_{\alpha}$ has cardinality at most $|\beta|.(|\beta|+1) < \mathfrak c$ and since there are $\aleph_0$ linear equations with integer coefficients, Lemma \ref{admiss} tells us there are $< \mathfrak c$ real numbers which are non-admissible for some linear equation with respect to $\bigcup_{\alpha < \beta} K_{\alpha}$. But $|P_{\beta}| = \mathfrak c$ (\cite[Theorem 2.8.3]{Kuc}), so we can find $x_{\beta} \in P_{\beta}$ such that $K_{\beta} = \bigcup_{\alpha < \beta} K_{\alpha} \cup \{x_{\beta}\}$ is independent, and it clearly satisfies all four conditions.

Let $K = \bigcup_{\alpha < \mathfrak c} K_{\alpha}$. Clearly, $K$ is an independent subset of $\R/\Z$. Moreover, $\lambda^*(K) = 1$. Indeed, if $C$ is a closed subset contained in $(\R/\Z) \setminus K$, then $C$ is countable, because any closed subset of $\R/\Z$ is a disjoint union of a perfect closed set and the countable set of its isolated points by the Cantor-Bendixson theorem (\cite[Theorem 2.8.2]{Kuc}). By definition, $(\R/\Z) \setminus K$ has inner Lebesgue measure $0$, hence $K$ has outer Lebesgue measure $1$.
\end{proof}

\begin{rmq}\label{RmqInde}
\begin{enumerate}
    \item The previous conclusion extends Körner's result \ref{Kor}, as clearly if $K$ has positive outer measure, then so does $q.K$ for any $q \geq 1$, and we also don't need to restrict the order of linear relations we are avoiding. However, in the proof of Theorem \ref{inde} we assume that $\R/\Z$ (or equivalently, $\R$) can be well-ordered, which notoriously requires some form of the axiom of choice. But Körner's result \ref{Kor} is obtained using the Baire Category Theorem, which, in its full generality, is equivalent to the axiom of dependent choice $\mathsf{DC}$ (\cite{Blair}). As we will show in the last section of this paper (see Lemma \ref{nonMesur}), a set $K$ satisfying the properties of Theorem \ref{inde} must be so that $K$ or $K+K$ is non-measurable. As shown by the celebrated works \cite{Sol} of Solovay (assuming the existence of an inaccessible cardinal), $\mathsf{ZF+DC}$ itself is not enough to prove the existence of such a set, so a stronger form of the axiom of choice is necessary to construct a set such as $K$.
    \item We remark that this set $K$ defines a subset $\tilde{K}$ of $\R$ such that $\tilde{K}$ has positive outer Lebesgue measure and $\tilde{K}$ only has trivial solutions to non-homogeneous linear equations with second term an integer $k \in \Z$, in the sense that $\displaystyle \sum_{i=1}^n m_i x_i = m \in \mathbb Z$ implies that there is a partition $(P_i)_{1 \leq i \leq k}$ of $\{1, \dots, n\}$ such that $j \mapsto x_j$ is constant on $P_i$ and $\displaystyle \sum_{j \in P_i} m_j x_j \in \mathbb Z$. Conversely any such set yields a subset of $\R/\Z$ such as our $K$. Our method of proof could be adapted to the setting over $\R$.
    \item It is noteworthy that it is impossible for a subset $E$ of $\R/\Z$ to be independent and such that $n.E=\R/\Z$ for some positive integer $n$. Indeed, such a set could be lifted to an independent set of reals with $n.\tilde E = \R$ as in the previous remark. This subset is then $\Q$-linearly independent but also a generating family, \textit{i.e.} be a Hamel basis. But if $e_1, \dots, e_{n+1}$ are $n+1$ distinct elements of $E$, then $e_1 + \dots + e_{n+1}$ should be a sum of $n$ elements of $E$, which contradicts their linear independence over $\Q$.
    \item A completely similar construction yields an independent subset of $\mathbb R \setminus \mathbb Z$ which is not meager in the sense of Baire. Instead of intersecting with every perfect sets, it suffices to intersect with every dense $G_{\delta}$, of which there are $\mathfrak c$ and they all contain a Cantor set, so $\mathfrak c$ elements.
\end{enumerate}
\end{rmq}

\section{Sidon subsets with full sumset}\label{PartieSidon}

In the case of relations of order $4$, we can obtain a stronger result, that can actually be extended to a large class of abelian groups. Recall that a Sidon set in an abelian semi-group is a subset $S$ such that for every $a,b,c,d \in S$, $a+b=c+d$ implies $\{a,b\} = \{c, d\}$. This is equivalent to saying that it has no non-trivial solution to the equation $x_1+x_2=x_3+x_4$.

\begin{theorem}
Let $G$ be an infinite abelian group of cardinality $\kappa$ such that $|\ker(x \mapsto 2x)| < \kappa$ and $|\ker(x \mapsto 3x)| < \kappa$. Assume $E$ a subset of $G$ such that for every $x \in G$, $|\{(a, b) \in E^2 \mid a+b=x\}| = \kappa$. Then there exists a Sidon subset $S$ contained in $E$ such that $S+S=G$.
\end{theorem}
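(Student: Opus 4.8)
The plan is to build $S$ by transfinite recursion over a well-ordering $(g_\alpha)_{\alpha<\kappa}$ of $G$ (of order type the cardinal $\kappa$), adjoining at most two points at each stage so as to force $g_\alpha$ into the sumset while preserving the Sidon property. Write $f_k\colon x\mapsto kx$ for the multiplication-by-$k$ endomorphism of $G$; by hypothesis $|\ker f_2|,|\ker f_3|<\kappa$, and since $f_4=f_2\circ f_2$ we have $\ker f_4=f_2^{-1}(\ker f_2)$, so $|\ker f_4|\le|\ker f_2|^2<\kappa$ as well. I would construct an increasing chain of Sidon sets $S_0\subset S_1\subset\cdots$ with $S_\alpha\subset E$, $g_\alpha\in S_\alpha+S_\alpha$, and $|S_\alpha|<\kappa$ for every $\alpha<\kappa$ (at most two new points are added per stage and there are $\le|\alpha|+1<\kappa$ stages up to $\alpha$, so $|S_\alpha|<\kappa$). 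At a stage $\alpha$, set $S_{<\alpha}=\bigcup_{\beta<\alpha}S_\beta$, which is Sidon, contained in $E$, and of cardinality $<\kappa$. If $g_\alpha\in S_{<\alpha}+S_{<\alpha}$, put $S_\alpha=S_{<\alpha}$; otherwise I must find $a,b\in E$ with $a+b=g_\alpha$ for which $S_{<\alpha}\cup\{a,b\}$ is still Sidon, and set $S_\alpha=S_{<\alpha}\cup\{a,b\}$.

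The crux is that only $<\kappa$ of the $\kappa$ pairs $(a,b)\in E^2$ with $a+b=g_\alpha$ are forbidden. Write $S=S_{<\alpha}$. First discard the pairs with $a=b$ (these force $2a=g_\alpha$, at most $|\ker f_2|<\kappa$ of them) and those with $a\in S$ or $b\in S$ (at most $|S|<\kappa$ each, since $a$ determines $b$). For a remaining pair, $S\cup\{a,b\}$ is Sidon unless one of the new unordered sums — $2a$, $2b$, $a+b=g_\alpha$, and $a+s,b+s$ for $s\in S$ — equals another of these, or equals a sum of two elements of $S$, the two pairs being distinct. I would go through these coincidences one at a time, each time substituting $b=g_\alpha-a$. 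A coincidence involving only $a$ together with elements of $S$ forces $a\in\{s_1+s_2-s_3:s_i\in S\}$ (from $a+s_1=s_2+s_3$), of size $\le|S|^3<\kappa$, or $a\in f_2^{-1}(\{s_1+s_2:s_i\in S\})$ (from $2a=s_1+s_2$), of size $\le|S|^2|\ker f_2|<\kappa$, and symmetrically for $b$. A coincidence $a+s_1=b+s_2$ gives $2a\in\{g_\alpha+s_1-s_2:s_i\in S\}$, hence $a$ lies in a set of size $\le|S|^2|\ker f_2|<\kappa$. The coincidence $2a=2b$ gives $4a=2g_\alpha$, so $a\in f_4^{-1}(\{2g_\alpha\})$, of size $\le|\ker f_4|<\kappa$. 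The coincidence $2a=b+s$ gives $3a=g_\alpha+s$, so $a\in f_3^{-1}(\{g_\alpha+s:s\in S\})$, of size $\le|S|\,|\ker f_3|<\kappa$ — and it is here, with the symmetric case $2b=a+s$, that the hypothesis on $\ker f_3$ enters. Every other conceivable coincidence ($g_\alpha=s_1+s_2$, $2a=g_\alpha$, $g_\alpha=a+s$, $2a=a+s$, $a+s=b+s$, and their symmetrics) is impossible — respectively because $g_\alpha\notin S+S$, because $a\ne b$, or because $a,b\notin S$. The forbidden pairs thus form a finite union of sets of cardinality $<\kappa$, so there are $<\kappa$ of them and an admissible pair $(a,b)$ exists; setting $S_\alpha=S\cup\{a,b\}$ preserves all the invariants.

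Finally I would set $S=\bigcup_{\alpha<\kappa}S_\alpha\subset E$. It is Sidon, since a nontrivial solution of $x_1+x_2=x_3+x_4$ would involve four elements lying in a common $S_\alpha$, which is Sidon; and $S+S=G$, since every $g\in G$ equals some $g_\alpha\in S_\alpha+S_\alpha\subset S+S$. The only genuine difficulty is the bookkeeping in the second paragraph: one must enumerate every way that adjoining the two correlated points $a$ and $b=g_\alpha-a$ can destroy the Sidon property and check that, after the substitution $b=g_\alpha-a$, each surviving constraint becomes a $|\ker f_k|$-to-one condition on $a$ with $k\in\{1,2,3,4\}$; the assumptions on $\ker f_2$ and $\ker f_3$ (the latter forced precisely by the interaction of $2a$ with $b=g_\alpha-a$) are exactly what keep the number of forbidden pairs below $\kappa$.
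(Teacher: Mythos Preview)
Your proof is correct and follows essentially the same transfinite-recursion-plus-obstruction-counting strategy as the paper: enumerate $G$, at each stage adjoin at most two points of $E$ summing to the current target, and show the forbidden pairs number fewer than $\kappa$ via a case analysis on Sidon violations. The one noteworthy difference is your treatment of the case $2a=2b$: you deduce $4a=2g_\alpha$ and invoke $|\ker f_4|\le|\ker f_2|^2<\kappa$, whereas the paper's table claims that $2a_\beta=2b_\beta$ together with $a_\beta+b_\beta=x_\beta$ forces $a_\beta\in\{x_\beta\}/2$, i.e.\ $2a_\beta=x_\beta$ --- but in fact one only gets $4a_\beta=2x_\beta$, so your handling of this case is actually the more careful one.
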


\begin{proof}
The idea of the proof is the same as Theorem \ref{inde}.

Let $(x_{\alpha})_{\alpha < \kappa}$ be an enumeration of $G$. This allows us to construct by transfinite induction a family $(S_{\alpha})_{\alpha < \kappa}$ of subsets of $E$ satisfying the following properties:
\begin{enumerate}
    \item $x_{\alpha} \in S_{\alpha} + S_{\alpha}$.
    \item $S_{\alpha}$ is a Sidon set.
    \item $|S_{\alpha}| \leq 2(|\alpha|+1) < \kappa.$
    \item $S_{\alpha'}\subseteq S_\alpha$ for $\alpha'<\alpha$
\end{enumerate}

We start with $S_0 = \{a_0, b_0\}$ for some $a_0, b_0 \in E$ such that $a_0+b_0=x_0$. Assuming $S_{\alpha}$, for $\alpha < \beta$ have been constructed and setting $U_\beta=\bigcup_{\alpha < \beta} S_{\alpha}$, there are two cases to consider. If $x_{\beta} = a+b$ for some $a, b \in S_{\alpha}$ with $\alpha < \beta$ then we set $S_{\beta} = U_\beta$, which clearly satisfies the four required properties.

Otherwise, let $O$ be the set of obstructions
\begin{align*}
    O= & (U_\beta+U_\beta-U_\beta)\cup((U_\beta+U_\beta)/2)\cup (\left\lbrace x_\beta\right\rbrace+U_\beta-U_\beta-U_\beta)\cup ((2\left\lbrace x_\beta\right\rbrace-U_\beta-U_\beta)/2) \\ & \cup ((\left\lbrace x_\beta\right\rbrace+U_\beta-U_\beta)/2)\cup ((\left\lbrace x_\beta\right\rbrace+U_\beta)/3)\cup ((2\left\lbrace x_\beta\right\rbrace-U_\beta)/3)\cup (\left\lbrace x_\beta\right\rbrace /2),
\end{align*}
where for every $A \subseteq G$ and every integer $k$, $A/k$ denotes the (potentially empty) set of elements $g \in G$ such that $kg \in A$. By the hypothesis on the kernels of $x \mapsto 2x$ and $x \mapsto 3x$, and since $|U_{\beta}| < \kappa$, $|O| < \kappa$. Moreover, since $G$ is a group, then for any $o \in O$, there exists at most one $e \in E$ such that $x_\beta=o+e$, so we have 
$$|\{(a,b) \in ((O \cap E) \times E)\cup (E\times (O \cap E)) \mid a+b = x_{\beta}\}|\leqslant 2|U|<\kappa = |\{(a,b) \in E^2 \mid a+b = x_{\beta}\}|.$$
Therefore we can find $a_\beta, b_\beta \in E\setminus O$ such that $a_\beta+b_\beta=x_{\beta}$. Then we set $S_{\beta} = U_{\beta} \cup \{a_\beta, b_\beta\}$ which satisfies the four required properties. Indeed, items 1, 3 and 4 are obvious. To show item 2, assume by contradiction that $S_\beta$ is not a Sidon set. That is, suppose there exists a non-trivial solution $a+b=c+d$ in $S_\beta$. As this solution is non-trivial, $a_\beta$ and $b_\beta$ can only occur at most twice each among $a,b,c$ and $d$, and if one occurs twice, then it is on the same side of the equal sign. By symmetry, here are the different possible cases:
\begin{itemize}
    \item $a,b,c,d\in U_\beta$. But then there exists $\alpha<\beta$ such that $a,b,c,d\in S_\alpha$, which is impossible because $S_\alpha$ is a Sidon set.
    \item $a=a_\beta$, $b=b_\beta$, $c,d\in U_\beta$. But then $x_\beta=c+d$, which has been excluded in this case.
    \item all the other cases are impossible by definition of $O$, as explained in the table below.

    \begin{center}
       \begin{tabular}{ | c | c | }
        \hline 
   \textbf{Cases} & \textbf{Implication}  \\
   \hline  
   $a=a_\beta$ and $b,c,d\in U_\beta$ & $a_\beta\in (U_\beta+U_\beta-U_\beta )$ \\ \hline
   $a=b=a_\beta$ and $c,d\in U_\beta$ & $a_\beta\in ((U_\beta+U_\beta)/2)$ \\ \hline $a=b_\beta$ and $b,c,d\in U_\beta$ & $a_\beta\in (\left\lbrace x_\beta\right\rbrace+U_\beta-U_\beta-U_\beta)$ \\ \hline $a=b=b_\beta$ and $c,d\in U_\beta$ & $a_\beta\in((2\left\lbrace x_\beta\right\rbrace-U_\beta-U_\beta)/2)$ \\ \hline $a=a_\beta$, $c=b_\beta$ and $b,d\in U_\beta$ & $a_\beta\in ((\left\lbrace x_\beta\right\rbrace+U_\beta-U_\beta)/2)$ \\ \hline $a=b=a_\beta$, $c=b_\beta$ and $d\in U_\beta$ & $a_\beta\in ((\left\lbrace x_\beta\right\rbrace+U_\beta)/3)$ \\ \hline $a=a_\beta$, $c=d=b_\beta$ and $b\in U_\beta$ & $a_\beta\in ((2\left\lbrace x_\beta\right\rbrace-U_\beta)/3)$ \\ \hline $a=b=a_\beta$ and $c=d=b_\beta$ & $a_\beta\in (\left\lbrace x_\beta\right\rbrace /2)$ \\ \hline 
 \end{tabular}
    \end{center}
\end{itemize}
Therefore $S_\beta$ is a Sidon set. Finally, $S = \bigcup_{\alpha < \kappa} S_{\alpha}$ is a Sidon set included in $E$ such that $S+S=G$.
\end{proof}

\section{Comparing Hausdorff dimension and sumsets with measure}
\label{sec:compare}

Theorem \ref{Sidon} proves in particular that there exists a Sidon set $S\subset\R$ such that $S+S$ has non-zero Lebesgue measure, while Theorem \ref{mainResult} shows the existence of a Sidon set with Hausdorff dimension 1. It may be legitimate to ask whether there is a direct implication between these two theorems. In other words, is there any implication in $\R$ between $\dim_H(S)=1$ and $S+S$ having positive Lebesgue measure? The answer is no, as the following two theorems show.

\begin{theorem}\label{HGLP}
   There exists a set $E\subset[0,1]$ of Hausdorff dimension 1, such that $E+E$ have Lebesgue measure zero.
\end{theorem}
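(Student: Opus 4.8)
The plan is to build $E$ as a Cantor-type set whose construction ratios shrink to zero, so that $E$ has Hausdorff dimension $1$ while both $E+E$ and $E-E$ are null. First I would recall the classical dichotomy: a set of digits (in a variable base) gives full Hausdorff dimension as soon as the relative density of allowed digits tends to $1$, while the sumset $E+E$ (resp. difference set $E-E$) is governed by the digit sumset $D_k + D_k$ (resp. $D_k - D_k$) at each scale $k$. So I want to choose, at stage $k$, a base $n_k$ growing very fast and a digit set $D_k \subseteq \{0,1,\dots,n_k-1\}$ with $|D_k|/n_k \to 1$ but $|D_k + D_k|/n_k \to 0$ and $|D_k - D_k|/n_k \to 0$ simultaneously. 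The standard trick for the sum condition is to take $D_k$ to be an arithmetic-progression-like or ``spread-out'' set; but arithmetic progressions have small sumset and small difference set at the same time, so a single combinatorial choice handles both.

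Concretely, I would set $E = \{\sum_{k\ge 1} d_k / (n_1 \cdots n_k) : d_k \in D_k\}$ for a rapidly increasing sequence $n_k$ (say $n_k = 2^{2^k}$ or faster) and $D_k = \{ j \lfloor n_k^{1/2}\rfloor : 0 \le j \le \lfloor n_k^{1/2}\rfloor \}$, so $|D_k| \approx n_k^{1/2}$ — wait, that gives density $\to 0$, which kills the dimension. Instead I would take the complementary idea: let $D_k = \{0,1,\dots,n_k-1\} \setminus R_k$ where $R_k$ is an arithmetic progression of common difference roughly $n_k^{1/2}$, or more simply $D_k$ a union of long blocks arranged so that $D_k$ misses a positive-density ``grid'' — actually the cleanest is $D_k$ all residues that are $\le n_k - n_k^{1/2}$ in each of $n_k^{1/2}$ consecutive windows, giving $|D_k| = n_k - n_k^{1/2} \cdot \text{(something)}$ with $|D_k|/n_k\to 1$, while $D_k + D_k$ omits a $1 - o(1)$ fraction because the ``missing window'' pattern is stable under addition. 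The key step is thus to exhibit one explicit digit-set family with $|D_k|/n_k \to 1$ yet $|D_k \pm D_k| = o(n_k)$; I would verify this by a direct counting argument on the chosen pattern (e.g. removing every element in a window of length $n_k^{1/2}$ out of each block of length $2 n_k^{1/2}$, so density of $D_k$ is $1/2$… which again does not tend to $1$ — so the windows must themselves thin out with $k$, length $n_k^{\epsilon_k}$ with $\epsilon_k \to 0$, and spacing comparable, forcing a careful but elementary estimate).

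Having fixed the digit sets, the Hausdorff dimension lower bound $\dim_H E \ge 1$ follows from the standard mass distribution principle / Frostman argument for homogeneous Cantor sets: put the natural measure $\mu$ giving each stage-$k$ cylinder mass $\prod_{j\le k} |D_j|^{-1}$, and check that $\mu(B(x,r)) \le r^{s}$ for every $s<1$ once $k$ is large, which reduces to $\log |D_k| / \log n_k \to 1$, i.e. exactly $|D_k|/n_k \to 1$ together with $n_k \to \infty$. The upper bound $\dim_H E \le 1$ is automatic in $[0,1]$. For the sumset, I would note $E + E \subseteq \{\sum_k e_k/(n_1\cdots n_k) : e_k \in D_k + D_k + \{0, \text{carry}\} \}$, and since carries only enlarge each digit alphabet by one element, $E+E$ (and $E-E$ likewise) is covered at stage $k$ by at most $\prod_{j \le k}(|D_j + D_j| + 2)$ intervals of length $(n_1\cdots n_k)^{-1}$, so the Lebesgue measure of $E+E$ is at most $\liminf_k \prod_{j\le k} (|D_j+D_j|+2)/n_j = 0$ because each factor is $o(1)$.

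The main obstacle is the combinatorial core: finding a single sequence of digit sets $D_k$ that is simultaneously dense ($|D_k|/n_k\to 1$, needed for full dimension) and has doubly-small sumset and difference set ($|D_k \pm D_k| = o(n_k)$, needed for the null conclusions). These pull in opposite directions — Freiman/Plünnecke-type bounds say a set of density close to $1$ has sumset density close to $1$ at a fixed scale — so the resolution must use that the relevant density $|D_k|/n_k$ is allowed to approach $1$ only as $k\to\infty$: at each finite stage the density is bounded away from $1$, and one lets it creep up slowly while the multiplicativity $\prod_j |D_j+D_j|/n_j$ still diverges to $0$. Making the two rates compatible (density $1 - \delta_k$ with $\sum_k$-type control so that the sum/difference densities, which will be of order $\sqrt{\delta_k}$ or so, still have product zero, while $\prod(1-\delta_k)$'s logarithmic analogue, $\sum \log n_k^{-\delta_k\text{-loss}}$, vanishes) is the delicate bookkeeping I expect to occupy most of the real work; everything else is standard Cantor-set measure theory.
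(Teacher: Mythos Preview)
Your overall strategy---a digit-restricted Cantor set in a variable base, dimension via the mass distribution principle, sumset measure via a product of digit-sumset densities---is exactly the paper's. But you have misidentified the two key constraints, and this sends you chasing a tension that does not exist.

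You correctly note that the Frostman lower bound for $\dim_H E$ reduces to $\log|D_k|/\log n_k \to 1$, and then write ``i.e.\ exactly $|D_k|/n_k\to 1$ together with $n_k\to\infty$''. That ``i.e.'' is false: if $|D_k|=n_k/2$ for every $k$, the density is stuck at $1/2$ while $\log|D_k|/\log n_k = 1-\log 2/\log n_k\to 1$ as soon as $n_k\to\infty$. Full Hausdorff dimension needs only a \emph{logarithmic} density condition, not $|D_k|/n_k\to 1$. Symmetrically, for $\lambda(E+E)=0$ you demand $|D_k+D_k|/n_k\to 0$ (``each factor is $o(1)$''), but all you need is $\prod_k\frac{|D_k+D_k|+O(1)}{n_k}=0$, i.e.\ $\sum_k\bigl(1-|D_k+D_k|/n_k\bigr)=\infty$, which allows each factor to be as close to $1$ as you like.

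Once both over-constraints are dropped, the Pl\"unnecke--Ruzsa obstruction you correctly flag simply disappears, and no rapidly growing bases or delicate window constructions are needed. The paper takes base $n_k=2(k+1)$ and digit set $D_k=\{0,1,\dots,k\}$: density $1/2$ at every scale, so $\log|D_k|/\log n_k\to 1$ and $\dim_H E=1$ by the standard Cantor formula; meanwhile $D_k+D_k=\{0,\dots,2k\}$ misses only the single top digit, so $\lambda(E+E)\le\prod_k\frac{2k+1}{2k+2}=0$ by divergence of the harmonic series. That is the whole proof.
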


\begin{proof}
       Let $E$ be the subset of $[0,1]$ made up of reals written only with digits ranging from $0$ to $d-1$ in their expansion in the generalized basis $(2d)_{d\geqslant 2}=(4,6, \dots ,2n, \dots)$ :
       $$E=\left\lbrace \sum_{i\geq 1} \frac{a_i}{2^i (i+1)!} \ : \ a_i\in\{ 0,\dots,i\} \right\rbrace\subset [0,1].$$
       It is easily seen that $E+E$ is made up of real numbers written without the $d+1$-th digit $2d-1$ in the generalized basis $(2d)_{d\geqslant 2}$: 
       \begin{align*}
           E+E  &=\left\lbrace \sum_{i\geq 1} \frac{b_i}{2^i (i+1)!} \ : \ b_i\in\{ 0,\dots,2i\} \right\rbrace \\ & = [0,1] \setminus \left\lbrace \sum_{i\geq 1} \frac{c_i}{2^i (i+1)!} \ : \ c_i\in\{ 0,\dots,2i+1\} \ \text{and } \exists i_0\geq 1 \text{ such that } c_{i_0}=2i_0+1   \right\rbrace.
       \end{align*}
       
       We can see $E$ as a generalized Cantor set that can be constructed in stages, where each interval of stage $k$ is subdivided into $k+1$ intervals of length $\frac{1}{2(k+1)}$ times smaller.

       Here are the first steps in the construction.

       \begin{center}
   \begin{tikzpicture}[scale=1.2]
    \draw[->] (0,-1) -- (13,-1) node[anchor=north west] {};
        \foreach \y in {0,1/2,1,3,3.5,4}
    \foreach \x in {0,...,3}
    \draw[red, line width=1mm] (\x/16+\y,-1) -- (\x/16+1/32+\y,-1);   
    \draw[->] (0,0) -- (13,0) node[anchor=north west] {};
\draw[red, line width=1mm] (0,0) -- (1/4,0);
\draw[red, line width=1mm] (1/2,0) -- (3/4,0);
\draw[red, line width=1mm] (1,0) -- (5/4,0);
\draw[red, line width=1mm] (3,0) -- (13/4,0);
\draw[red, line width=1mm] (14/4,0) -- (15/4,0);
\draw[red, line width=1mm] (16/4,0) -- (17/4,0);
        \draw[->] (0,1) -- (13,1) node[anchor=north west] {};
    \foreach \x in {0,...,24}
        \draw (\x/2,1.1) -- (\x/2,0.9) node[anchor=north] {};
\draw[red, line width=1mm] (0,1) -- (6/4,1);
\draw[red, line width=1mm] (12/4,1) -- (18/4,1);
        \draw[->] (0,2) -- (13,2) node[anchor=north west] {};
    \foreach \x in {0,...,4}
        \draw (3*\x,2.1) -- (3*\x,1.9) node[anchor=north] {\x/4};
        \draw[red, line width=1mm] (0,2) -- (6,2);
\end{tikzpicture}
\end{center}

From Example 4.6 of \cite{Falco} (with $c=1/2$, $m_k=k$ and $\delta_k=\frac{1}{2^kk!}$), we have 
$$\dim_H(E)\geqslant \liminf_{k\rightarrow \infty}\frac{\log(m_1...m_k)}{-\log(\delta_k)}=\liminf_{k\rightarrow \infty}\frac{\log(k!)}{k\log(2)+\log(k!)}=1.$$

On the other hand, for any $k\geqslant 2$, let $I_k\subset \mathcal{P}([0,1])$ be the set of intervals $\left\lbrace \left[ \frac{i}{2^{k-1}k!} \mid \frac{i+1}{2^{k-1}k!}\right] \ \mid \ i\in\N\right\rbrace$. For any $k\geqslant2$, let's also set the following operators to construct $E+E$ algorithmically,
\begin{align*}
T_k \ : \mathcal{P}([0,1]) & \longrightarrow \mathcal{P}([0,1]) \\ A & \longmapsto \bigcup_{[a,b]\in I_k\cap \mathcal{P}(A)} \left[a,b-\frac{b-a}{2(k+1)}\right].
\end{align*}
This gives $E+E=\bigcap_{k\geqslant 2}A_k$, where $(A_k)_{k \geqslant 2}$ is the sequence defined by $A_2=[0,3/4]$ and $A_{k+1}=T_k(A_k)$. Here is a representation of $A_2$, $A_3$ and $A_4$ :

\begin{center}
  \begin{tikzpicture}[scale=1.2]
    \draw[->] (0,0) -- (13,0) node[anchor=north west] {};
    \foreach \x in {0,...,4}
        \draw (3*\x,0.1) -- (3*\x,-0.1) node[anchor=north] {};
    \foreach \y in {0,3,6} {
        \draw[red, line width=1mm] (0+\y,0) -- (105/240+\y,0);
        \draw[red, line width=1mm] (120/240+\y,0) -- (225/240+\y,0);
        \draw[red, line width=1mm] (1+\y,0) -- (345/240+\y,0);
        \draw[red, line width=1mm] (360/240+\y,0) -- (465/240+\y,0);
        \draw[red, line width=1mm] (480/240+\y,0) -- (585/240+\y,0);
    }
    \draw[->] (0,1) -- (13,1) node[anchor=north west] {};
    \foreach \x in {0,...,4}
        \draw (3*\x,1.1) -- (3*\x,.9) node[anchor=north] {};
    \draw[red, line width=1mm] (0,1) -- (15/6,1);
    \draw[red, line width=1mm] (12/4,1) -- (12/4+15/6,1);
    \draw[red, line width=1mm] (24/4,1) -- (24/4+15/6,1);
    \draw[->] (0,2) -- (13,2) node[anchor=north west] {};
    \foreach \x in {0,...,4}
        \draw (3*\x,2.1) -- (3*\x,1.9) node[anchor=north] {\x/4};
    \draw[red, line width=1mm] (0,2) -- (9,2);
\end{tikzpicture}
\end{center}
At step $k$ we only keep a proportion $\frac{2k-1}{2k}$ of the measure from the previous step, hence $\lambda(A_k)=\displaystyle\prod_{i=2}^k \frac{2i-1}{2i} = \displaystyle\prod_{i=2}^k \left(1 - \frac{1}{2i}\right)$ and finally
$$\lambda(E+E)= \lim_{k \to +\infty} \prod_{i=2}^k \left(1 - \frac{1}{2i}\right)=0,$$ since the series $\sum_{i \geq 2} \log \left(1 - \frac{1}{2i}\right)$ diverges to $-\infty$.
\end{proof}

Conversely, we prove the following result.

\begin{theorem}\label{HPLG}
    Let $h\in[1/2,1]$, then there exists a subset $E$ of $[0,1]$ of Hausdorff dimension $h$ and such that $E+E$ has non-zero Lebesgue measure.
\end{theorem}
\begin{proof}
    We only need to construct a set $E$ such that $\dim_H(E)\leq 1/2$ and $\lambda(E+E)>0$. For then any set $F$ with Hausdorff dimension $h>1/2$ containing $E$ (for example the union of $E$ and any set of Hausdorff dimension $h$) will satisfy the conditions of the Theorem.

    For any $n\in\N$, consider an additive $2$-base $B_n$ of $[0,n+1]$. Let $E\subset[0,1]$ be the set of elements whose digits in the generalized basis $(1/2,1/3, \dots ,1/n, \dots)$ are in $\prod_{n\in\N}B_n$. By the definition of a generalized basis, we have $E+E=[0,1]$, and we just need to make sure $E$ has the right dimension.
    
    To choose $B_n$ accordingly, we use a construction of Cassels (see \cite[Theorem 12, p.39]{HalberstamRoth}) that provides a $2$-base $\mathcal B$ of $\N$ such that its $n$-th element is $\beta n^2 + O(n)$, where $\beta > 0$ is a constant independent of $n$. We then truncate it up to $n$ to obtain $B_n = \mathcal{B} \cap [0, n]$. This satisfies $|B_n| = C \sqrt{n} + O(n^{1/4})$ for some constant $C > 0$ independent of $n$. Moreover, Cassels construction is such that this $2$-base contains at most $5$ consecutive integers. Thus, by approximating $E$ in steps (where the $n$-th step consists of fixing only the first $n$ digits in the above generalized basis), we can cover $E$ by at most $O(C^n\sqrt{n!})$ sets of sizes smaller than $\frac{5^n}{n!}$. Finally, we have (see \cite[Proposition 4.1, p.54]{Falco})
    $$\dim_H(E)\leq \lim_{n \to +\infty} \frac{\log\left( O(C^n\sqrt{n!})\right)}{- \log(5^n/n!)} \leq 1/2. $$
\end{proof} 
From this proof, we can easily deduce the following corollary.
\begin{cor}
    There exists a set $E\subset[0,1]$ of Hausdorff dimension $1/2$ such that $E+E=[0,1]$.
\end{cor}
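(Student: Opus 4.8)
The plan is simply to revisit the set $E$ already produced in the proof of Theorem~\ref{HPLG} and to record that it is a witness for the corollary. Recall that there $E$ is the set of $x\in[0,1]$ all of whose digits in the generalized basis $(1/2,1/3,\dots,1/n,\dots)$ lie in $\prod_{n\in\N}B_n$, where $B_n=\mathcal B\cap[0,n]$ is the truncation of Cassels' additive $2$-base $\mathcal B$ of $\N$ (so $|B_n|=C\sqrt n+O(n^{1/4})$, and $\mathcal B$ contains at most five consecutive integers). Two facts about this $E$ are established in the proof of Theorem~\ref{HPLG}: that $E+E=[0,1]$, by the definition of a generalized basis, and that $\dim_H E=1/2$. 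The corollary is the conjunction of these two facts, so there is essentially nothing left to do; below I indicate where each piece comes from.

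For $E+E=[0,1]$: given $z\in[0,1]$, expand it in the above generalized basis as $z=\sum_{n\ge1}c_n/(n+1)!$ with $c_n$ ranging over the $n$-th digit-set; since $B_n+B_n$ contains that full digit-set one can write $c_n=a_n+b_n$ with $a_n,b_n\in B_n$, and then $a=\sum_n a_n/(n+1)!$ and $b=\sum_n b_n/(n+1)!$ lie in $E$ with $a+b=z$, while the reverse inclusion is immediate from how the ranges of the $B_n$ are set up. For $\dim_H E\le 1/2$: fixing the first $n$ digits covers $E$ by $O(C^n\sqrt{n!})$ intervals of length at most $5^n/n!$ (this is where the bound on runs of consecutive integers in $\mathcal B$ is used), and \cite[Proposition 4.1, p.54]{Falco} gives $\dim_H E\le\lim_{n}\frac{\log(C^n\sqrt{n!})}{-\log(5^n/n!)}=1/2$ since $\log n!$ grows faster than $n$. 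For $\dim_H E\ge 1/2$: as in the first line of the proof of Theorem~\ref{HPLG}, $\dim_H E<1/2$ would force $\dim_H(E+E)\le\min(1,2\dim_H E)<1$ and hence $\lambda(E+E)=0$, contradicting $E+E=[0,1]$.

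Since every ingredient is quoted from the proof of Theorem~\ref{HPLG}, there is no genuine obstacle here. If one wanted a self-contained argument, the only non-routine input would be the existence of the nested family $(B_n)$ of finite additive $2$-bases of initial segments of $\N$ with $|B_n|=\Theta(\sqrt n)$ and uniformly bounded runs of consecutive elements, which is Cassels' theorem \cite[Theorem 12, p.39]{HalberstamRoth}; the passage from ``$B_n+B_n$ covers the $n$-th digit-set for every $n$'' to ``$E+E=[0,1]$'' is then a straightforward manipulation of generalized-base expansions.
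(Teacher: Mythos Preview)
The proposal is correct and takes essentially the same approach as the paper: the corollary is stated immediately after Theorem~\ref{HPLG} with the remark that it follows from that proof, and you do precisely this, spelling out that the set $E$ built there already satisfies both $E+E=[0,1]$ and $\dim_H E=1/2$. Your write-up is in fact more detailed than the paper's, which gives no separate argument at all.
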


Theorems \ref{HGLP} and \ref{HPLG} show that in $\R$, there is no direct implication between “Hausdorff dimension 1” and “sum set having non-zero Lebesgue measure”. 

\section{Avoiding non-linear patterns}\label{ANLP}

In this section, we apply the same method multiple times to address certain questions regarding classical studied topics. Most of the following proofs will be based on transfinite induction and the following key lemma.

\begin{lem}\label{brique}
    Let $n\in\N^*$ and $\mathcal{A}$, a family of zero sets of nonzero polynomials in $n$ variables such that $|\mathcal{A}|<\mathfrak c$. Then $\lambda_*^n(\bigcup_{A\in\mathcal{A}} A)=0$, where $\lambda^n_*$ denotes the $n$-dimensional inner Lebesgue measure.
\end{lem}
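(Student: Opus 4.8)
The plan is to reduce the statement to the single-polynomial case and then to a covering argument showing that the zero set of one nonzero polynomial in $n$ variables has $n$-dimensional (inner, in fact full) Lebesgue measure zero, after which a cardinality bookkeeping step finishes the job.

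First I would recall the classical fact that if $P \in \R[X_1,\dots,X_n]$ is a nonzero polynomial, then its zero set $Z(P) = \{x \in \R^n \mid P(x) = 0\}$ has $n$-dimensional Lebesgue measure zero. This is proved by induction on $n$: for $n=1$ a nonzero polynomial has finitely many roots; for the inductive step, write $P(x_1,\dots,x_n) = \sum_{j=0}^d c_j(x_1,\dots,x_{n-1}) x_n^j$ with at least one $c_j$ not identically zero, apply Fubini, and observe that for every $(x_1,\dots,x_{n-1})$ outside the measure-zero set where all the $c_j$ vanish (measure zero by the induction hypothesis), the slice is the finite zero set of a nonzero one-variable polynomial, hence a null slice. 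So each $A \in \mathcal{A}$ is a Borel set with $\lambda^n(A) = 0$.

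Now for the cardinality step. Since $|\mathcal{A}| < \mathfrak c$, the union $U = \bigcup_{A \in \mathcal{A}} A$ is a union of fewer than $\mathfrak c$ Lebesgue-null sets, so its complement $\R^n \setminus U$ is a set which, modulo the $\sigma$-ideal of Lebesgue-null sets, has "full measure": every compact subset of $U$ is covered by... — more carefully, to bound the \emph{inner} measure of $U$ I argue directly. Let $F \subseteq U$ be any closed (equivalently, $F_\sigma$, or just measurable) subset. A measurable subset of a union of $<\mathfrak c$ null sets need not be null in general in $\mathsf{ZF}$, but here $F$ is a measurable set, hence (by regularity) it is, up to a null set, a countable union of compact sets; so it suffices to show each compact $C \subseteq U$ is null. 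Since $C$ is compact and $C \subseteq \bigcup_{A \in \mathcal{A}} A$ with each $A$ a closed null set (the zero set of a polynomial is closed), but there are $<\mathfrak c$ of them and no Baire-category or compactness argument gives a finite subcover directly — instead I invoke that a $\sigma$-additive measure is $<\mathfrak c$-additive only under extra hypotheses, which we do \emph{not} want to assume. Therefore the clean route is: the complement $\R^n \setminus U = \bigcap_{A} (\R^n \setminus A)$; each $\R^n \setminus A$ is an open set of full measure; we are claiming $U$ has inner measure zero, equivalently every measurable $E \subseteq U$ is null. Take such $E$; by Fubini-type slicing it is enough to show that for each fixed value of the coordinates appearing, one slice is null — but this again reduces to: intersect $E$ with a single $A$. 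The cleanest formulation: since $\lambda^n_*(U) = \sup\{\lambda^n(C) \mid C \subseteq U \text{ compact}\}$ and we want this to be $0$, assume for contradiction $C \subseteq U$ is compact with $\lambda^n(C) > 0$; then the family $\{A \cap C\}_{A \in \mathcal{A}}$ is a covering of $C$ by $<\mathfrak c$ relatively closed null subsets of $C$, and I use that the additivity of the Lebesgue null ideal is uncountable and in fact, granting the ambient set-theoretic framework of the paper (which already uses choice freely), at least $\aleph_1$; but this still does not reach $\mathfrak c$ without an extra axiom such as Martin's Axiom.

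The main obstacle is therefore exactly this set-theoretic subtlety, and I expect the intended proof to sidestep it: the statement should be read with the \emph{inner} measure, and the key point is that $U$ is contained in a set of the form $\bigcup_{A\in\mathcal A} A$ where crucially we only need that the \emph{complement} $\R^n\setminus U$ is dense and "large" in the sense needed downstream — but to honestly get $\lambda^n_*(U)=0$ one argues as follows. Any measurable $E \subseteq U$ satisfies, for almost every line $\ell$ in a fixed direction, that $E \cap \ell \subseteq U \cap \ell$; and $U \cap \ell$ is a union of $<\mathfrak c$ sets each of which is the zero set of a nonzero one-variable polynomial (for all but a null set of lines, by the $n=1$ reduction applied in each coordinate direction and an iteration over the $n$ directions), hence each $A \cap \ell$ is \emph{finite}; but a union of $<\mathfrak c$ finite sets can still be all of $\ell$. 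So genuinely the argument needs: the union of $<\mathfrak{c}$ null sets has inner measure $0$, which is a theorem in $\mathsf{ZFC}$ precisely because $\lambda^n_*(U) > 0$ would give a measurable $E\subseteq U$ of positive measure, and then one decomposes using the continuum-many parallel hyperplanes to derive that some hyperplane section of $\mathcal A$ still covers a positive-measure set in one lower dimension, inducting down to dimension $1$ where a positive-measure (indeed nonempty-interior, after the induction) subset of $\R$ cannot be covered by $<\mathfrak c$ finite sets since an interval has cardinality $\mathfrak c$. That final cardinality contradiction — an interval of reals is not a union of fewer than $\mathfrak c$ finite sets — is what I would present as the crux, with the polynomial null-set fact and Fubini as the supporting machinery.
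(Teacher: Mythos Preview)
Your final paragraph does land on the paper's strategy: induct on the dimension, use Fubini to pass to a hyperplane slice of positive measure, and terminate at $n=1$ with a cardinality contradiction. That skeleton is correct and is exactly what the paper does. However, the write-up contains two genuine errors and one real gap that you should fix.

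First, the sentence ``a union of $<\mathfrak c$ finite sets can still be all of $\ell$'' is false, and in fact its negation is precisely the base case: a union of fewer than $\mathfrak c$ finite sets has cardinality $<\mathfrak c$, while any closed set of positive Lebesgue measure in $\R$ is uncountable and hence (Cantor--Bendixson) has cardinality $\mathfrak c$. This is the paper's $n=1$ argument. Note also that you never obtain an \emph{interval} in the base case, only a closed set of positive measure, so the cardinality has to come from Cantor--Bendixson, not from ``an interval has $\mathfrak c$ points''.

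Second, the assertion ``the union of $<\mathfrak c$ null sets has inner measure $0$, which is a theorem in $\mathsf{ZFC}$'' is false in general: it is consistent with $\mathsf{ZFC}$ that the additivity of the null ideal is $\aleph_1 < \mathfrak c$, so $\R$ itself can be a union of fewer than $\mathfrak c$ null sets. The lemma is \emph{not} a special case of a general additivity fact; it holds only because the sets are algebraic, and the proof must use that.

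Third, the inductive step has a gap you never close: when you slice $A = Z(P)$ by a hyperplane $H$, you need $A\cap H$ to again be the zero set of a \emph{nonzero} polynomial in $n-1$ variables, i.e.\ a proper subvariety of $H$. This can fail if $A$ itself is (or contains as a component) exactly the hyperplane $H$. The paper handles this by first replacing each $A$ by its finitely many irreducible components (still $<\mathfrak c$ sets) and then choosing the slicing direction so that none of these components is a hyperplane with that normal; since there are $\mathfrak c$ directions and $<\mathfrak c$ components, such a direction exists. Without this step your induction does not go through.
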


\begin{proof}
    We prove the result by induction on $n$. When $n=1$, elements of $\mathcal A$ are non-trivial algebraic closed sets on the line, i.e. are finite sets, and their union has cardinality less than the continuum, and thus cannot have positive inner measure by the Cantor-Bendixson theorem.
    Now, assume the result for all dimensions less than $n$ and assume by contradiction that we can find such a family with $\lambda_*^n(\bigcup_{A\in\mathcal{A}} A)>0$. By definition, there exists a closed set $F\subseteq \bigcup_{A\in\mathcal{A}} A$ such that $\lambda^n(F)>0$. Now, for all $A\in\mathcal{A}$, $A$ has a finite number of irreducible components. Since $|\mathcal{A}|<\mathfrak{c}$, we may assume $\mathcal{A}$ is composed of less than $\mathfrak{c}$ connected non-trivial algebraic closed sets (that is, different from the whole space). Each of these closed sets intersects a given hyperplane either in itself (if it is equal to it) or in a non-trivial subvariety of dimension at most $n-1$. As, $|\mathcal{A}|<\mathfrak{c}$, there exists a hyperplane $H$ intersecting each element of $\mathcal A$ as in the latter case. If $d$ is the normal direction to $H$, Fubini's theorem then yields
    $$0<\int_{\R^n}\mathbbm{1}_F(\mathbf{x})\text{d}\mathbf{x}=\int_d\int_{x+d^{\bot}}\mathbbm{1}_F(x,\mathbf{y})\text{d}\mathbf{y}\text{d} x,$$
    so there is $x\in d$ such that $\lambda_*^{n-1}(F\cap(x+d^{\bot}))>0$. But $F\cap(x+d^{\bot})$ is included in a union of less than $\mathfrak c$ non-trivial manifolds of the hyperplane $x+d^{\bot}$, so this is a contradiction with the induction hypothesis.

\end{proof}

\subsection{Algebraically independent subsets of $\R$}

First for $n=1$, we show there exists an algebraically independent subset of $\R$ with full outer Lebesgue measure.

\begin{theorem}
    There exists a subset $A \subseteq \R$ which is algebraically independent and has full outer Lebesgue measure (and consequently Hausdorff dimension $1$).
\end{theorem}

\begin{proof}
    Let $(P_\alpha)_{\alpha < \mathfrak c}$ be a transfinite enumeration of the perfect sets of non-zero measure in $\R$. We construct $A$ by transfinite induction, adding at step $\alpha$ an element $x_\alpha$ in $P_\alpha$ excluding the zero sets of all nonzero polynomials $Q(e_1,...,e_k,X)$ with coefficients in $\Q$, $k\in\N$ and $e_1,...,e_k$ varying within the set of previously constructed elements. This step is possible because there are fewer than $\mathfrak c$ such polynomials and $|P_\alpha |=\mathfrak c$.

    We can therefore successfully carry out the transfinite induction, and the set $A$ thus constructed intersects every perfect set of positive measure in $\R$. By the Cantor-Bendixson theorem, $\R \setminus A$ only contains closed sets of measure zero. Thus, $\lambda_*(\R \setminus A) = 0$. Therefore, we conclude that full outer Lebesgue measure, and hence $\dim_H(E) = 1$.
\end{proof}

\subsection{Subset without trapeze}

 We say that a set is trapezoid-free if it contains no three collinear points and no four distinct points that form two collinear vectors. M\'athé \cite{Mathe} proved the existence of a compact set $E\subset\R^n$ that is trapezoid-free and has Hausdorff dimension $n-1$.   This result is optimal in a certain sense, as it is known \cite{Mattila1995} that for any Borel set $A\subset \R^n$
  with $\dim_H(A)>n-1+s$, there exists a line $D$ such that $\dim_H(A\cap D)>s$. In particular, this implies that there are three collinear points in $A$, and therefore, $A$ cannot be trapezoid-free.

However, if we allow $A$ to be non-Borel, what is the maximum Hausdorff dimension it can attain? By assuming the axiom of choice, we provide an answer to this question.

\begin{theorem}\label{trap}
Let $n\in\N^*$. There exists a trapezoid-free subset $E\subset \R^n$ that has total outer Lebesgue measure (and consequently Hausdorff dimension $n$).
\end{theorem}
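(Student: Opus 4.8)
The plan is to run once more the transfinite induction used for Theorem~\ref{inde} and for the algebraic-independence theorem above, this time taking as targets the perfect subsets of $\R^n$ of positive Lebesgue measure and as the obstruction to be avoided the creation of a trapezoid. I fix an enumeration $(P_\alpha)_{\alpha<\mathfrak c}$ of all perfect closed subsets of $\R^n$ with $\lambda^n(P_\alpha)>0$; there are exactly $\mathfrak c$ of these, since closed cubes already provide $\mathfrak c$ of them while $\R^n$ has at most $\mathfrak c$ closed subsets in total. I then construct points $x_\alpha\in P_\alpha$ by recursion on $\alpha<\mathfrak c$, writing $E_{<\alpha}=\{x_\beta : \beta<\alpha\}$, a set of cardinality at most $|\alpha|<\mathfrak c$. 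At stage $\alpha$ the requirement is that $E_{<\alpha}\cup\{x_\alpha\}$ stay trapezoid-free, and a short case analysis pins down the forbidden positions for $x_\alpha$: to avoid three collinear points, $x_\alpha$ must miss the affine line through each pair of points of $E_{<\alpha}$; to avoid four distinct points $p_1,p_2,p_3,p_4$ with $p_2-p_1$ parallel to $p_4-p_3$, observe that since the four points are distinct $x_\alpha$ can occupy at most one of the four slots, so with the three other points already fixed the requirement that $x_\alpha$ complete such a quadruple confines it to a single affine line (for example, if $x_\alpha$ plays the role of $p_1$, then $x_\alpha$ lies on the line through $p_2$ in direction $p_4-p_3$); the sub-cases where $x_\alpha$ is a common endpoint of the two parallel vectors involve only three distinct points and are already covered by the collinearity clause. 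Hence the forbidden set $B_\alpha$ is a union of fewer than $\mathfrak c$ affine lines of $\R^n$.

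For $n\ge 2$ an affine line of $\R^n$ is the zero set of a single nonzero polynomial (realize it as an intersection of $n-1$ hyperplanes and add up the squares of their linear forms), so $B_\alpha$ is a union of fewer than $\mathfrak c$ zero sets of nonzero polynomials and Lemma~\ref{brique} gives $\lambda^n_*(B_\alpha)=0$. Since $\lambda^n(P_\alpha)>0$ and inner measure is monotone, $P_\alpha\not\subseteq B_\alpha$, so I can pick $x_\alpha\in P_\alpha\setminus B_\alpha$ and the recursion goes through. Put $E=\{x_\alpha : \alpha<\mathfrak c\}$. Then $E$ is trapezoid-free, because any forbidden configuration uses only finitely many points of $E$ and, at the stage at which the one of largest index among them was chosen, exactly this configuration was excluded. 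And $E$ has total outer Lebesgue measure: were there a closed $F\subseteq\R^n\setminus E$ with $\lambda^n(F)>0$, its perfect kernel would be (by the Cantor--Bendixson theorem) a perfect closed set of positive measure, hence some $P_\alpha\subseteq F$, contradicting $x_\alpha\in P_\alpha\cap E$; thus $\R^n\setminus E$ has inner Lebesgue measure $0$, so $E$ has full outer measure and therefore $\dim_H E=n$.

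The one point that is not pure bookkeeping is the reduction just described for the ``no trapezoid'' clause: one must check with care that although a quadruple of distinct points could a priori contain the new point $x_\alpha$ in several of its slots, it in fact contains it in only one, so that the set $x_\alpha$ must avoid is always an honest affine line — a set of positive codimension — and therefore is reached by Lemma~\ref{brique}. Once this is granted, the proof is a transcription of the earlier arguments.
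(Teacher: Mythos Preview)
Your proof is correct and follows the same transfinite-induction scheme as the paper's: pick a point in each perfect set of positive measure avoiding an obstruction set of inner measure zero (via Lemma~\ref{brique}), then invoke Cantor--Bendixson. The only difference is that you describe the obstructions geometrically as a union of fewer than $\mathfrak c$ affine lines, whereas the paper encodes the trapezoid condition through the single polynomial $Q(X,Y,Z,T)=(y_1-x_1)(t_2-z_2)-(y_2-x_2)(t_1-z_1)$ in the first two coordinates and so obtains hyperplanes; both are zero sets of nonzero polynomials and Lemma~\ref{brique} handles them identically.
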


\begin{proof}
    Being trapezoid-free means that it is impossible to find four points defining two parallel sides. To achieve this, it suffices to construct points such that the first two coordinates form a basis of $\mathbb{R}^2$. We are thus led to consider the polynomial $Q(X,Y,Z,T) = (y_1 - x_1)(t_2 - z_2) - (y_2 - x_2)(t_1 - z_1)$ and seek to construct a set $E$ such that $Q$ does not vanish for any quadruple $(X,Y,Z,T) \in E^4$ with at least three of them distinct, $X \neq Y$ and $Z \neq T$. To do this, we follow the same method. Let $(P_\alpha)$ be a transfinite enumeration of the perfect sets of non-zero measure in $\R^n$. We construct $E$ by transfinite induction, adding at step $\alpha$ an element $X \in P_\alpha$ such that $Q(X,Y,Z,T) \neq 0$ and $X \neq Y$ for all $Y,Z \neq T$ in $E_{\alpha}$, the set of elements already constructed. Such an $X$ exists because $U_\alpha = \bigcup_{Y,Z,T \in E_\alpha, Z \neq T} \left\lbrace X \in \R^n \mid Q(X,Y,Z,T) = 0 \right\rbrace$ is a union of $|E_\alpha|^3$ hyperplanes, and since $|E_\alpha|^3 < \mathfrak{c}$, it follows that $U_\alpha$ has inner measure zero by Lemma \ref{brique}, and so has $U_{\alpha} \cup E_{\alpha}$. Therefore, $P_\alpha \setminus U_\alpha$ has non-zero outer measure and is consequently non-empty.

    The set $E$ as constructed contains no trapezoids and intersects every perfect set of positive measure in $\R^n$. By the Cantor-Bendixson theorem, $\R^n \setminus E$ only contains closed sets of measure zero. Thus, $\lambda_*(\R^n \setminus E) = 0$. Therefore, we conclude that full outer Lebesgue measure, and hence $\dim_H(E) = n$.
\end{proof}

\subsection{Subset without repeated angle}

M\'{a}thé \cite{Mathe} studied the Hausdorff dimension of sets in $\R^n$
  that do not contain a right triangle. He proved the existence of a compact subset of $\R^n$
  with Hausdorff dimension $n/2$ that does not contain any right triangle. Also he established the existence of a compact set in $\R^n$
  with Hausdorff dimension $n/8$, such that no triplet of its points forms an angle in a given countable family. By using the axiom of choice, we extend these two results by constructing a full Hausdorff dimension set in $\R^n$ that not contains the same angle twice.

\begin{theorem}\label{angle}
Let $n\in\N^*$. There exists a subset $E$ of $\R^n$ that has total outer Lebesgue measure (and consequently Hausdorff dimension $n$) such that the points of $E$ do not form the same angle twice.
\end{theorem}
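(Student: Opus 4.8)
The plan is to follow the exact same transfinite-induction scheme used in Theorems \ref{trap} and in the algebraically-independent construction, the only change being the polynomial condition that we force to be avoided. The key point is to encode ``three points $X,Y,Z$ of $E$ determine the same angle as three other points $X',Y',Z'$ of $E$'' as the vanishing of finitely many nonzero polynomials in the coordinates, so that Lemma \ref{brique} applies. Recall that the (unsigned) angle $\widehat{YXZ}$ at vertex $X$ satisfies
$$\cos^2\widehat{YXZ}=\frac{\langle Y-X,Z-X\rangle^2}{\|Y-X\|^2\,\|Z-X\|^2},$$
so the equality of the angle at $X$ in $(X,Y,Z)$ with the angle at $X'$ in $(X',Y',Z')$ is equivalent to the vanishing of the polynomial
$$R=\langle Y-X,Z-X\rangle^2\,\|Y'-X'\|^2\|Z'-X'\|^2-\langle Y'-X',Z'-X'\rangle^2\,\|Y-X\|^2\|Z-X\|^2$$
in the $6n$ coordinates involved. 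Since $R$ is not identically zero (e.g.\ it is nonzero when the two triangles are chosen with different angles), its zero set is a proper algebraic subvariety of $(\R^n)^6$.

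Now I would set up the induction. Let $(P_\alpha)_{\alpha<\mathfrak c}$ be a transfinite enumeration of the perfect subsets of $\R^n$ of positive Lebesgue measure. Suppose at stage $\alpha$ we have built $E_\alpha$ with $|E_\alpha|\le|\alpha|+1<\mathfrak c$. We want to add a point $X\in P_\alpha$ so that $E_\alpha\cup\{X\}$ still contains no repeated angle. A repeated angle involving the new point $X$ uses $X$ as one of the six vertices in at least one of the two triangles; fixing which vertex $X$ plays and choosing the remaining (at most five) vertices among $E_\alpha$ gives, via the polynomial $R$ above (and its cousins obtained by permuting which vertex is the apex), a family of at most $C\cdot|E_\alpha|^5<\mathfrak c$ algebraic conditions on $X\in\R^n$, each cutting out a proper subvariety of $\R^n$ (the specialization of $R$ in the free variable $X$ is again not identically zero, because $R$ depends genuinely on each vertex). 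We must also throw in the finitely-many $<\mathfrak c$ conditions ``$X$ is collinear with two previously chosen points'' and ``$X$ coincides with a previously chosen point'' so that we only ever create genuine (non-degenerate) triangles and distinct points. By Lemma \ref{brique}, the union of all these zero sets has inner Lebesgue measure $0$, hence so does its union with the $<\mathfrak c$-sized set $E_\alpha$; therefore $P_\alpha$ minus this union still has positive outer measure and in particular is nonempty, so a suitable $X=x_\alpha$ exists. Set $E_\alpha=E_{\alpha-1}\cup\{x_\alpha\}$ at successor steps and take unions at limit steps.

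Finally, let $E=\bigcup_{\alpha<\mathfrak c}E_\alpha$. By construction no two triples of points of $E$ form the same angle, and $E$ meets every perfect set of positive measure in $\R^n$; by the Cantor--Bendixson theorem, every closed subset of $\R^n\setminus E$ is countable, so $\lambda^n_*(\R^n\setminus E)=0$ and hence $\lambda^{n,*}(E)$ is full, whence $\dim_H(E)=n$. The main obstacle, and the only genuinely non-routine point, is the bookkeeping needed to verify that the polynomial $R$ (and each of its specializations in the variable playing the role of the newly added point) is not the zero polynomial: one must check that for each choice of which of the six vertices is $X$, and with the other vertices regarded as parameters ranging over a Zariski-dense set, the resulting polynomial in $X$ does not vanish identically — this is where one uses that $E_\alpha$ was built to avoid degeneracies (no three collinear, all points distinct), so the fixed vertices can be taken in ``general position'' and the angle-equality genuinely constrains $X$. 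Everything else is a verbatim repetition of the scheme in Theorem \ref{trap}.
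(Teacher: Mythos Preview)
Your proposal is correct and follows essentially the same transfinite-induction-plus-Lemma~\ref{brique} scheme as the paper's proof. The only difference is cosmetic: the paper encodes the forbidden condition via a parameter $\theta$ ranging over the already-formed angles (writing $Q_{\theta,Y,Z}(X)=\langle Y-X,Z-X\rangle^{2}-\cos^{2}\theta\,\|Y-X\|^{2}\|Z-X\|^{2}$) rather than via your six-point polynomial $R$, and in fact your bookkeeping about the new point possibly occurring in both triples is slightly more thorough than the paper's own sketch.
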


\begin{proof}
   We shall once again employ a transfinite induction, but this time considering the following polynomials $Q_{\theta,Y,Z}(X)=\left(\sum_{i=1}^n (x_i-y_i)(x_i-z_i)\right)^2-\cos(\theta)^2\left\| X-Y \right\|^2\left\| X-Z \right\|^2$ where $Y$ and $Z$ belong to the set of previously constructed elements, and $\theta$ belongs to the set of angles formed by those points. These are all non-zero polynomials because
   $$Q_{\theta,Y,Z}(X)=\left( \cos \left(\widehat{X-Y,X-Z} \right)^2 -\cos(\theta)^2\right)\left\| X-Y \right\|^2\left\| X-Z \right\|^2 ,$$
   and for all fixed $Y,Z,\theta$, we can find $X$ such that $\cos \left(\widehat{X-Y,X-Z} \right)^2 \neq\cos(\theta)^2$, $\left\| X-Y \right\|\neq 0$ and $\left\| X-Z \right\|\neq 0$. At step $\alpha < \mathfrak c$, if we have constructed the set $E_\alpha$ and the set of angles they form is $A_\alpha$, we obtain that $\bigcup_{Y \in E_\alpha} \bigcup_{Z \in E_\alpha, Z \neq Y} \bigcup_{\theta \in A_\alpha} \left\lbrace X\in \R^n \vert Q_{\theta,Y,Z}(X)=0\right\rbrace$ has inner measure zero by Lemma \ref{brique} because at this stage of induction, there are strictly fewer than $\mathfrak{c}$ elements in $E_\alpha$ and $A_\alpha$. We can thus choose an element $X$ in the $\alpha$-th perfect set of $\R^n$ such that $Q_{\theta,Y,Z}(X)\neq 0$ for all $Y \neq Z \in E_\alpha$ and $\theta \in A_\alpha$, which implies that $(\widehat{X-Y,X-Z})$ is a different angle from the angles formed by th previously constructed points, thus completing the induction.
\end{proof}

\begin{rmq}
\begin{enumerate}
\item In addition, considering the polynomials $Q_{\theta,W,Y,Z}(X)=\left(\sum_{i=1}^n (x_i-y_i)(w_i-z_i)\right)^2-\cos(\theta)^2\left\| X-Y \right\|^2\left\| W-Z \right\|^2$ where $\theta$ belongs to the set of angles formed by two vectors of the already constructed elements, it is also possible to prevent the set $E$ from containing four directions (i.e., eight points) that form the same angle twice. 
\item By removing one point if necessary, this construction also yields a set devoid of right-angled triangle. This considerably extends the $n/2$ bound provided by M\'athé in the case of Borel sets. This was already established by Erd\H{o}s-Komj\'ath in \cite{ErdosKomjath}, assuming the Continuum Hypothesis. Indeed, they showed that the plane can be covered by a countable collection of sets devoid of right triangle. In particular, one of the elements of this partition is a subset of the plane with Hausdorff dimension 2 that does not contain any right triangle.
\item For any $s>0$, by possibly removing a set of points with Hausdorff dimension $n-s$ (which will not alter the Hausdorff dimension), this construction also provides a set without angles that belongs to a given family with Hausdorff dimension $n-s$. This new result both extends the $n/8$ M\'athé's bound for Borel sets and allows the exclusion of a significantly larger, non-countable family of angles.

\end{enumerate}
\end{rmq}

\subsection{Subset without repeated distance}

In a similar vein, it is compelling to investigate the maximum Hausdorff dimension of a set in $\mathbb{R}^n$ that avoids repeated distances.

The distance set conjecture ('Falconer distance problem' \cite{FalcoExp}) asserts that for every analytic set $E \subset \mathbb{R}^n$ $(n \geq 2)$ of Hausdorff dimension larger than $n / 2$, the set of distances formed by $E$,
$$
D(E)=\{|x-y|: x, y \in E\} \subset \mathbb{R}
$$
has positive Lebesgue measure. If $E \subset \mathbb{R}^n$ is an analytic set with $\operatorname{dim}_H E>n / 2$, we can find uncountably many disjoint Borel sets $E_i \subset E$ with $\operatorname{dim}_H E_i=\left(n / 2+\operatorname{dim}_H E\right) / 2$, and if the distance set conjecture holds, each $D\left(E_i\right)$ has positive Lebesgue measure. But it is impossible for uncountably many measurable sets of positive measure to be pairwise disjoint, and $E$ must have repeated distances. This means that if $E\subset \mathbb{R}^n$ is an analytic set avoiding repeated distances, then $\dim_H(E)\leqslant n/2$. Conversely, M\'athé \cite{Mathe} has established the existence of such a set with Hausdorff dimension $\max(1,n/2)$, which answers the question for analytic sets.

What if $E$ is not a analytic set? Erd\H{o}s-Kakutani \cite{ErdosKakutani}, Davies \cite{Davies} and Kunen \cite{Kunen} proved that for any $n \geqslant 1$, the Continuum Hypothesis (CH) is equivalent to the existence of a partition of $\mathbb{R}^n$ into a countably many pieces, with every distance occurring in every piece at most once (Erd\H{o}s and Kakutani for $n=1$, Davies for $n=2$, Kunen for $n\geqslant 3$). Thus, without the CH, such a partition is impossible, but does there always exist a set $E \subset \mathbb{R}^n$ with Hausdorff dimension $n$ that avoids repeated distances? We provide an answer to this question, assuming the axiom of choice.

\begin{theorem}\label{dist}
Let $n\in\N^*$. There exists a subset $E\subset \R^n$ avoiding repeated distances that has total outer Lebesgue measure (and consequently Hausdorff dimension $n$).
\end{theorem}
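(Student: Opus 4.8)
The plan is to follow the exact same transfinite-induction template used for the trapezoid-free and repeated-angle theorems, now encoding the "no repeated distance" constraint as the non-vanishing of a suitable polynomial along the induction. A repeated distance is a configuration of four points $X, Y, Z, W$ (with $\{X,Y\} \neq \{Z,W\}$) such that $\|X-Y\| = \|Z-W\|$, i.e. $\sum_{i=1}^n (x_i - y_i)^2 = \sum_{i=1}^n (z_i - w_i)^2$. So I would introduce, for each pair $Y, Z, W$ of previously constructed points with $Z \neq W$, the polynomial
$$Q_{Y,Z,W}(X) = \sum_{i=1}^n (x_i - y_i)^2 - \sum_{i=1}^n (z_i - w_i)^2,$$
and also, to handle repeated distances that reuse a new point twice as an endpoint of two different segments anchored at old points, the polynomials $\sum_{i=1}^n (x_i - y_i)^2 - \sum_{i=1}^n (x_i - y'_i)^2$ for old $Y \neq Y'$. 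Each such $Q$ is a nonzero polynomial in the coordinates of $X$: for fixed old points the constant term or the linear part does not vanish identically, so its zero set is a proper algebraic subset of $\R^n$.

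The induction itself runs as follows. Fix a transfinite enumeration $(P_\alpha)_{\alpha < \mathfrak c}$ of the perfect subsets of $\R^n$ of positive Lebesgue measure. Having built $E_\alpha$ with $|E_\alpha| < \mathfrak c$ and such that no distance is repeated, consider the union over all admissible choices of old points $Y, Z, W$ (and $Y, Y'$) of the zero sets of the corresponding $Q$'s. There are at most $|E_\alpha|^3 < \mathfrak c$ of these, so by Lemma \ref{brique} this union has inner Lebesgue measure zero, as does its union with $E_\alpha$ itself. Hence $P_\alpha$ minus this union still has positive outer measure, so it is non-empty; pick $X_\alpha$ in it and set $E_{\alpha+1} = E_\alpha \cup \{X_\alpha\}$, taking unions at limit stages. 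One checks that adding $X_\alpha$ creates no repeated distance: any repeated distance in $E_{\alpha+1}$ not already present in $E_\alpha$ must involve $X_\alpha$, and the list of polynomials was designed to rule out every such case (the segment $X_\alpha$–old, equal to an old–old segment; two segments both from $X_\alpha$ to old points of equal length; two segments $X_\alpha$–old sharing the endpoint $X_\alpha$; the degenerate sub-cases where the same old point is reused are excluded since the configuration must be genuinely new). Finally $E = \bigcup_{\alpha < \mathfrak c} E_\alpha$ meets every perfect set of positive measure, so by Cantor–Bendixson $\R^n \setminus E$ contains only closed null sets, giving $\lambda_*(\R^n \setminus E) = 0$, i.e. $\lambda^{*,n}(E)$ is full and $\dim_H(E) = n$.

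The main obstacle, and the only place requiring genuine care, is the bookkeeping of which four-point configurations can produce a repeated distance once the new point $X_\alpha$ is inserted, and verifying that the finite list of polynomials $Q$ above genuinely covers all of them — in particular making sure each $Q$ really is a nonzero polynomial in $X$ (this fails only in the trivial cases $Y = Y'$ or $\{Y,Z,W\}$ forcing the segment lengths to coincide identically, which correspond exactly to the "trivial" repeated distances one must not try to forbid). Once that case analysis is laid out cleanly — ideally in a small table as in the proof of Theorem \ref{Sidon} — the rest is a verbatim repetition of the argument of Theorems \ref{trap} and \ref{angle}. I would also remark, as in those theorems, that deleting at most one point if necessary and combining with the previous constructions lets one simultaneously forbid repeated distances, repeated angles and trapezoids, which is exactly the content of Theorem \ref{thmIntro} in the introduction.
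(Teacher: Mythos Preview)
Your proposal is correct and follows the same transfinite-induction template as the paper, invoking Lemma \ref{brique} on the union of fewer than $\mathfrak c$ zero sets of the distance polynomials and concluding via Cantor--Bendixson. The paper only records the single family $Q_{W,Y,Z}(X)=\|X-W\|^{2}-\|Y-Z\|^{2}$ (spheres centred at old points with old radii) and does not spell out your second family handling $\|X-Y\|=\|X-Y'\|$, so your case analysis is in fact more thorough than the original, but the method is identical.
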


\begin{proof}
    This time, we consider the functions $Q_{W,Y,Z}(X)=\left\| X-W \right\|^2-\left\| Y-Z \right\|^2$ where $W,Y$ and $Z$ belong to the set of previously constructed elements. The equation $Q_{W,Y,Z}(X)=0$ represents a $n$-sphere centered at $W$ with a radius of $\left\| Y-Z \right\|$ and so by Lemma \ref{brique} we have
    $$\lambda_*^n\left( \bigcup_{W,Y,Z}\left\{ X\in\R^n | Q_{W,Y,Z}(X)=0\right\}\right)=0 ,$$ and the rest of the proof goes as in the previous Theorems.
\end{proof}

\begin{rmq}
\begin{enumerate}
    \item In particular, this construction provides a subset of $\mathbb{R}^n$
  with Hausdorff dimension $n$ that does not contain any isosceles triangles.
    \item By combining the constraint functions from the proofs of the three previous theorems, we establish the existence of a subset of $\mathbb{R}^n$ with Hausdorff dimension $n$ that is free of trapezoids, right triangles, and isosceles triangles. In fact, this proves the following theorem.
\end{enumerate}
\end{rmq}

\begin{theorem}
Let $n\in\N^*$. There exists a trapezoid-free subset $E\subset \R^n$ avoiding repeated distances that has total outer Lebesgue measure (and consequently Hausdorff dimension $n$) such that the points of $E$ do not form the same angle twice.
\end{theorem}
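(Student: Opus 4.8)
The plan is to run a single transfinite induction that simultaneously enforces all three constraints, exactly as in the proofs of Theorems \ref{trap}, \ref{angle} and \ref{dist}, by throwing all the relevant polynomial obstructions into one pot at each stage. Concretely, let $(P_\alpha)_{\alpha < \mathfrak c}$ be a transfinite enumeration of the perfect subsets of $\R^n$ of positive Lebesgue measure. We build an increasing family $(E_\alpha)_{\alpha < \mathfrak c}$ with $|E_\alpha| \leq |\alpha| + 1 < \mathfrak c$, $E_\alpha \cap P_\alpha \neq \emptyset$, and such that at every stage $E_\alpha$ is trapezoid-free, contains no repeated distance, and forms no angle twice. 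At stage $\alpha$, writing $A_\alpha$ for the (necessarily $<\mathfrak c$) set of angles already realized by triples of points of $E_\alpha$, we consider the union $U_\alpha$ of the zero sets of all the nonzero polynomials occurring in the three earlier proofs: the trapezoid polynomials $Q(X,Y,Z,T)$ for $Y,Z,T \in E_\alpha$ with $Z \neq T$ (viewed as hyperplanes in the variable $X$), the angle polynomials $Q_{\theta,Y,Z}(X)$ for $Y \neq Z \in E_\alpha$ and $\theta \in A_\alpha$, and the distance polynomials $Q_{W,Y,Z}(X)$ for $W,Y,Z \in E_\alpha$.

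Each of these families was shown in the respective theorem to consist of genuinely nonzero polynomials in $X$, and the total number of them is bounded by a finite power of $|E_\alpha| + |A_\alpha| < \mathfrak c$, hence is $< \mathfrak c$. Therefore Lemma \ref{brique} applies and gives $\lambda^n_*(U_\alpha) = 0$; since $E_\alpha$ itself has cardinality $< \mathfrak c$ it also has inner measure zero, so $\lambda^n_*(U_\alpha \cup E_\alpha) = 0$. Consequently $P_\alpha \setminus (U_\alpha \cup E_\alpha)$ has positive outer measure and in particular is nonempty; pick $x_\alpha$ in it and set $E_{\alpha+1} = E_\alpha \cup \{x_\alpha\}$ (with unions taken at limit stages). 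By the argument already used, adding $x_\alpha$ cannot create a collinear triple, a repeated pair of collinear vectors, a repeated distance, or a repeated angle, because any such configuration would force $x_\alpha$ to lie on one of the zero sets collected in $U_\alpha$; one checks the finitely many cases of how $x_\alpha$ could appear among the points of the forbidden configuration exactly as in the tables/case analyses of the earlier proofs.

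Finally set $E = \bigcup_{\alpha < \mathfrak c} E_\alpha$. Every constraint is a property of finitely many points of $E$, all of which lie in some common $E_\alpha$, so $E$ is trapezoid-free, avoids repeated distances, and forms no angle twice. Moreover $E$ meets every perfect subset of $\R^n$ of positive measure, so by the Cantor--Bendixson theorem $\R^n \setminus E$ contains no closed set of positive measure, i.e.\ $\lambda^n_*(\R^n \setminus E) = 0$, whence $E$ has full outer Lebesgue measure and therefore $\dim_H(E) = n$.

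\textbf{Main obstacle.} There is no genuinely new difficulty: the only points requiring care are (i) verifying that merging the three families of obstructions keeps the count below $\mathfrak c$ at every stage — which holds since $|A_\alpha|$ is controlled by $|E_\alpha|$ — and (ii) redoing the finite case analysis for how a freshly added point can participate in a forbidden pattern, now for all three pattern types at once. Both are bookkeeping, already carried out piecewise in Theorems \ref{trap}, \ref{angle} and \ref{dist}; the substantive input, namely that a $<\mathfrak c$ union of algebraic hypersurfaces has inner measure zero, is Lemma \ref{brique}.
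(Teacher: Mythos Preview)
Your proposal is correct and is precisely the approach the paper takes: the paper's own ``proof'' is a one-line remark stating that by combining the constraint functions from the proofs of Theorems \ref{trap}, \ref{angle} and \ref{dist} one obtains the desired set, and you have simply written out that combination explicitly, with the same enumeration of perfect sets, the same use of Lemma \ref{brique} to control a $<\mathfrak c$ union of algebraic hypersurfaces, and the same Cantor--Bendixson conclusion.
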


\section{A non-measurable set and the role of the axiom of choice}\label{NonMesur}

We now proceed to show that, if $K$ is the set in Theorem \ref{inde}, then either $K$ or $K+K$ is not Lebesgue-measurable. As discussed in Remark \ref{RmqInde}, this implies that $K$ cannot be shown to exist using only $\mathsf{ZF}+\mathsf{DC}$. The following lemma is folklore.

\begin{lem}\label{liaison}
Let $E \subset \R$ be measurable with positive Lebesgue measure. Then there exist $m, n\in\Z\setminus\{ 0\}$ and a non-trivial solution in $E$ to the equation $mx+ny=0$.
\end{lem}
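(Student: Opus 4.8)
The plan is to reduce the statement to finding two \emph{distinct} elements of $E$ with nonzero rational ratio. Indeed, any solution $(x,y)\in E^2$ of an equation $mx+ny=0$ with $m,n\in\Z\setminus\{0\}$ and $x\neq y$ is automatically non-trivial: every trivial solution of $mx+ny=0$ must have $x=y$, since the only possible trivial patterns are the constant tuple $(x,x)$ with $(m+n)x=0$ (partition $\{1,2\}$) and the tuple $(0,0)$ (partition $\{1\},\{2\}$, which forces $mx=ny=0$ hence $x=y=0$). So it suffices to exhibit distinct $x,y\in E$ and positive integers $p,q$ with $qx=py$, and then take $(m,n)=(q,-p)$.

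First I would normalize: since $\{0\}$ is Lebesgue-null, one of $E\cap(0,\infty)$ and $E\cap(-\infty,0)$ has positive measure, and since a non-trivial solution of $mx+ny=0$ in $-E$ yields one in $E$ by negation, we may assume $E':=E\cap(0,\infty)$ has positive measure. The map $\exp$ is locally Lipschitz, hence sends Lebesgue-null sets to Lebesgue-null sets; therefore $F:=\log(E')$ is Lebesgue measurable (write $E'=B\cup N$ with $B$ Borel and $N$ null, so $\log E'=\log B\cup\log N$ is Borel-plus-null) and has positive measure, for otherwise $E'=\exp(F)$ would be null.

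Now apply the Steinhaus theorem to $F$: the difference set $F-F$ contains an interval $(-\delta,\delta)$ for some $\delta>0$. The set $\log(\Q_{>0})$ is dense in $\R$ — for any $\varepsilon>0$ one can pick a rational $r\in(1,e^{\varepsilon})$ so that $\log r\in(0,\varepsilon)$ — so there exist positive integers $p,q$ with $0<\log(p/q)<\delta$, in particular $p/q\neq 1$. Choosing $u,v\in F$ with $u-v=\log(p/q)$ gives $u\neq v$, whence $x:=e^{u}$ and $y:=e^{v}$ are distinct elements of $E'\subset E$ with $x/y=p/q$, i.e. $qx-py=0$. This is the desired non-trivial solution.

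The only substantial input is the Steinhaus theorem (together with the elementary remark that $\log$ transports it from the additive group $\R$ to the multiplicative group $\R_{>0}$, so that $\{xy^{-1}:x,y\in E'\}$ contains a neighbourhood of $1$ and hence meets $\Q_{>0}\setminus\{1\}$); everything else is bookkeeping about which solutions are trivial and about the measurability of $\log(E')$. I do not anticipate a genuine obstacle here, only the need to be careful with the degenerate cases $x=y$ and $x=0$ when checking non-triviality.
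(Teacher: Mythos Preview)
Your proof is correct. The reduction to finding distinct $x,y\in E$ with rational ratio is exactly the right target, and your handling of the trivial-solution bookkeeping is accurate. The measurability and positivity of $F=\log(E')$ are fine once one notes that $\log$ is a $C^1$ diffeomorphism $(0,\infty)\to\R$, hence (like $\exp$) maps null sets to null sets on each compact piece; you state only the $\exp$ direction explicitly, but the $\log$ direction is what you need for ``$\log N$ is null'', and it holds for the same reason.

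Your route differs from the paper's. The paper proves a \emph{multiplicative} Steinhaus-type statement directly: it picks a compact $K\subset E$ and an open $O\supset E$ with $\lambda(K)$ close to $\lambda(O)$, then shows that for $u$ in a small interval around $1$ one has $uK\subset O$ and hence $\lambda(K\cap uK)\geq 2\lambda(K)-\lambda(O)>0$, forcing $u\in K\cdot K^{-1}$. A rational $m/n$ in that interval then yields the relation. You instead transport the problem to the additive group via $\log$ and invoke the standard additive Steinhaus theorem as a black box. Your argument is shorter and more conceptual (it explains \emph{why} the multiplicative statement holds: it is the additive one conjugated by $\log$), at the cost of importing Steinhaus as an external ingredient; the paper's argument is self-contained and incidentally reproves a multiplicative Steinhaus lemma from first principles. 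Either way, the substance is the same: $E/E$ contains a punctured neighbourhood of $1$, hence a nontrivial positive rational.
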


\begin{proof}
Let $\varepsilon=\lambda(E)>0$. By the regularity of Lebesgue measure, there exists a compact $K\subset E$ and an open set $O$ containing $E$ such that $\lambda(K)>2\varepsilon/3$ and $\lambda(O)<4\varepsilon/3$. Since $O$ is open and contains $K$, $\R\setminus O$ is a closed set disjoint from $K$ and thus $d(\R\setminus O,K)=\alpha>0$.

Let $U=\left]1-\alpha/(2\max \vert K\vert),1+\alpha/(2\max \vert K\vert) \right[$. Then $UK \subset O$. Indeed, otherwise, there would be $o\in \R\setminus O$ and $k\in K$ such that $o=k(1+\beta)$, where $\beta\in U-1$. But $o-k\in \R\setminus \left]-\alpha,\alpha\right[$ and $\beta k\in \left[ -\alpha/2,\alpha/2\right]$, which is absurd.

Thus $UK\subset O$ and if $u\in U$, we have $$\lambda(K\cap uK)=\lambda(K)+\lambda(uK)-\lambda(K\cup UK)\geqslant 2\lambda(K)-\lambda(O)>0,$$ therefore $K\cap uK$ is non-empty, and $u\in KK^{-1}$. Therefore, $U\subset KK^{-1}$. Finally, $U\setminus\left\lbrace 0,1 \right\rbrace$ contains a rational $m/n\in KK^{-1}$ which means there exist $k,k'\in K$ such that $m/n=k/k'$, \textit{i.e.} $mk-nk'=0$. As $m/n\neq 0,1$, $m,n\neq 0$ and $k\neq k'$, so this is indeed a non-trivial solution.
\end{proof}

\begin{rmq}
    We note that this is a bit stronger than Steinhaus's lemma (\cite[Theorem 3.7.1]{Kuc}), stating that $E+E$ has non-empty interior. It is less known that there exists a Baire category version of this result, see for example \cite[Theorem 2.9.1]{Kuc}.
\end{rmq}

\begin{lem}\label{nonMesur}
    Let $E$ be a subset of $\R/\Z$ such that $E+E$ has positive outer Lebesgue measure, and such that $E$ has no non-trivial solutions to linear equations of order $4$. Then at least one among $K$ and $K+K$ is not Lebesgue-measurable.
\end{lem}

\begin{proof}
    If $E$ and $E+E$ were measurable, then we would have $\lambda(E+E)=\lambda^*(E+E)>0$. But 
    $$E+E\subset \left((E\setminus(\Q/\Z))+(E\setminus(\Q/\Z))\right)\cup (E+(\Q/\Z))\cup ((\Q/\Z)+(\Q/\Z)),$$
    so one of the three has to have positive Lebesgue measure. Since $\Q/\Z$ is countable and Lebesgue measure is invariant by translation, we obtain $\lambda((E\setminus(\Q/\Z))+(E\setminus(\Q/\Z)))>0$ or $\lambda(E) > 0$, which implies the former. By lifting to $[0, 1)$ and using Lemma \ref{liaison}, we find $m,n\in\Z$ and $x, y\in [0, 1)$ such that $x + \mathbb Z, y + \mathbb Z \in (E \setminus (\mathbb Q/\mathbb Z)) + (E \setminus (\mathbb Q/\mathbb Z))$ such that $mx+ny=0$ and $x \neq y$. Thus, there exist $x_1,x_2,y_1,y_2\in E\setminus(\Q/\Z)$ such that $x_1+x_2=x\mod 1$ and $y_1+y_2=y\mod 1$, but $x\neq y$ so $x_1+x_2\neq y_1+y_2$ and so $\left\lbrace x_1,x_2 \right\rbrace \neq \left\lbrace y_1,y_2 \right\rbrace$ in $\R/\Z$. Moreover as $x_1,x_2,y_1,y_2\notin \Q/\Z$, we have $mx_1,mx_2,ny_1,ny_2\neq 0$ and so  $(x_1,x_2,y_1,y_2)$ is a non-trivial solution in $E$ to the equation $mx+my+nz+nt=0$, which is absurd.
\end{proof}

\begin{rmq}
We cannot conclude that $E$ itself is not measurable because there exist measurable sets $E$ such that $E+E$ is not measurable, as was proved in \cite{Rub}, building on a previous result of Sierpiński, who showed that the sum of two measurable sets need not be measurable.
\end{rmq}

\begin{cor} \label{Indep}
Assuming the consistency of $\mathsf{ZF} +$"there exists an inaccessible cardinal", $\mathsf{ZF}+\mathsf{DC}$ cannot prove the existence of $K$ in Theorem \ref{inde}.
\end{cor}

\begin{proof}
    Solovay's model (\cite{Sol}) satisfies $\mathsf{ZF+DC}$ and every subset of $\R$, hence $\R / \Z$ is Lebesgue measurable. Therefore, the set $K$ cannot be constructed assuming only $\mathsf{ZF+DC}$.
\end{proof}

We remark that avoiding linear relations of order at least $4$ corresponds to $q=3$ in Körner's Theorem \ref{Kor}. This raises the following questions.\\

\noindent \textbf{Question 1.} Does there exist a measurable subset $E$ of $\R/\Z$ such that $E$ has no non-trivial linear relations of order at most $3$ and such that $\lambda(E+E) > 0$?\\

For related works, we remark that it is known that $\mathsf{ZF} + \mathsf{DC}$ cannot prove the existence of a Hamel basis, and that this existence doesn't imply that $\R$ is well-ordered (see \cite{Hamel}).\\

\noindent \textbf{Question 2.} Is it possible to construct a set such as $K$ without well-ordering the real line?\\

Finally, Shelah showed (\cite{Shelah}) that Solovay's result must use the consistency of $\mathsf{ZF} +$"there exists an inaccessible cardinal".\\

\noindent \textbf{Question 3.} Can we prove that $\mathsf{ZF}+\mathsf{DC}$ cannot prove the existence of $K$ only assuming the consistency of $\mathsf{ZF}$?\\

Shelah also showed in \cite{Shelah} that one doesn't need the consistency of an inaccessible cardinal to obtain a model $\mathsf{ZF} + \mathsf{DC}$ in which every subset of $\R$ has the Baire property. Showing that a set such as $K$ cannot have the Baire property would answer the above question in the affirmative.

Going back to the construction of Sidon sets with large sumset in Section \ref{PartieSidon}, we remark that in the case of $\R$, our construction can be made inside a null set $E$ (for example a set of the form $C+\Z$ for some convenient Cantor set in $[0, 1]$), so a Sidon set $S$ such that $S+S=\R$ can indeed be measurable, and therefore we cannot prove a result similar to Corollary \ref{Indep}. The same argument shows that $S$ can have the Baire property. In general, a measurable Sidon set $S$ such that $S+S=\R$ is a null set (argue as in Lemma \ref{nonMesur}).  Similarly, a Sidon set $S$ with the Baire property and such that $S+S=\R$ is of first category since a non-meager subset of $\R$ with the Baire property contains arbitrarily large arithmetic progressions (\cite{Bosh}). This raises the following question.\\

\noindent \textbf{Question 4.} Is it possible to construct a Sidon subset $S$ of $\R$ such that $S+S=\R$ in $\mathsf{ZF+DC}$?\\

The existence of a Hamel basis of $\R$ over $\Q$ implies the existence of a Vitali set (\cite[Lemma 1.1]{Hamel}), which doesn't have the Baire property and so, as before, it cannot be established in $\mathsf{ZF+DC}$. Showing that we can produce a Vitali set from a Sidon set with sum $\R$ would answer this question negatively.

\printbibliography

\end{document}